\newtheorem{thm}{Theorem}[section]
\newtheorem{prop}[thm]{Proposition}     
\newtheorem{lem}[thm]{Lemma}
\newtheorem{cor}[thm]{Corollary}
\theoremstyle{definition}
\newtheorem{defn}[thm]{Definition}
\newtheorem{example}[thm]{Example} 
\newtheorem{rem}[thm]{Remark}
\newtheorem{construction}[thm]{Construction}
\numberwithin{equation}{section}
\DeclareFontFamily{OT1}{rsfs}{}
\DeclareFontShape{OT1}{rsfs}{n}{it}{<-> rsfs10}{}
\DeclareMathAlphabet{\curly}{OT1}{rsfs}{n}{it}
\newcommand{\D}{{\bf D}} 
\newcommand{\NN}{\mathbb{N}} 
\newcommand{\RR}{\mathbb{R}} 
\newcommand{\PP}{\mathbb{P}} 
\newcommand{\CC}{\mathbb{C}} 
\newcommand{\GG}{\mathbb{G}} 
\newcommand{\FF}{\mathbb{F}} 
\newcommand{\MM}{V} 
\newcommand{\EE}{\mathbb{E}}
\newcommand{\DD}{\mathbb{D}}
\newcommand{\SG}{\mathfrak{S}}
\newcommand{\so}{\mathfrak{so}}
\newcommand{\ov}{\overline}
\newcommand{\into}{\hookrightarrow}
\newcommand{\be}{\begin{eqnarray*}}
\newcommand{\ee}{\end{eqnarray*}}
\newcommand{\bne}[1]{\begin{eqnarray} \label{#1} }
\newcommand{\ene}{\end{eqnarray}}
\newcommand{\bp}{
   \arraycolsep=6pt 
   \def\arraystretch{1}
   \begin{pmatrix}  
}
\newcommand{\ep}{
   \end{pmatrix}
   \arraycolsep=2pt 
   \def\arraystretch{1.2}
}
\newcommand{\xym}{\xymatrix}
\newcommand{\slot}{ \hspace{0.05in} {\rm \_} \hspace{0.05in} } 
\newcommand{\Hom}{\operatorname{Hom}}   
\renewcommand{\H}{\operatorname{H}}
\newcommand{\R}{\operatorname{R}} 
\newcommand{\Ker}{\operatorname{Ker}}
\newcommand{\Cok}{\operatorname{Cok}}
\newcommand{\Id}{\operatorname{Id}}
\newcommand{\Tr}{\operatorname{Tr}}    
\newcommand{\Diag}{\operatorname{Diag}}
\newcommand{\sdeg}{\operatorname{sdeg}}
\def\arraystretch{1.2} 
\begin{document}


\author{H.~I.~Bozma}
\address{Department of Electrical and Electronics Engineering, Bo\u{g}azi\c{c}i University}
\email{bozma@boun.edu.tr}

\author{W.~D.~Gillam}
\address{Robbinston, Maine}
\email{wdgillam@gmail.com}

\author{F.~\"Ozt\"urk}
\address{Department of Mathematics, Bo\u{g}azi\c{c}i University}
\email{ferit.ozturk@boun.edu.tr}

\date{\today}
\title[Morse-Bott functions on orthogonal groups]{Morse-Bott functions on orthogonal groups}

\begin{abstract}  We make a detailed study of various (quadratic and linear) Morse-Bott trace functions on the orthogonal groups $O(n)$.  
We describe the critical loci of the quadratic trace function $\Tr(AXBX^T)$ and determine their indices via perfect fillings of tables associated with the multiplicities of the eigenvalues of $A$ and $B$.
We give a simplified treatment of T.~Frankel's analysis of the linear trace function on $SO(n)$, as well as a combinatorial explanation of the relationship between the mod $2$ Betti numbers of $SO(n)$ and those of the Grassmannians $\GG(2k,n)$ obtained from this analysis.  We review the basic notions of Morse-Bott cohomology in a simple  case where the set of critical points has two connected components.  We then use these results to give a new Morse-theoretic computation of the mod $2$ Betti numbers of $SO(n)$. 
\end{abstract}

\maketitle

\section{Introduction} \label{section:intro}

This paper is devoted to a strictly Morse-theoretic study of various functions on the orthogonal groups $O(n)$.  Many of our results surely generalize to the classical groups $U(n)$ and $Sp(n)$ by replacing the base field $\RR$ with $\CC$ or the quaternion skew-field and making the usual modifications (replace transposes with conjugate-transposes and traces with their real parts).  For brevity and simplicity, however, we work strictly with the usual orthogonal groups and leave any generalizations to the interested reader.

We are essentially interested in  two classes of functions. The first is the quadratic trace function $f_{A,B} : O(n) \to \RR$ given as $f_{A,B}(X)=\Tr(AXBX^T)$, for some fixed, orthogonally diagonalizable $n \times n$ matrices $A,B$.  
In the case $A$ and $B$ are symmetric matrices, the question of finding the extrema of $f_{A,B}$  restricted to the signed permutation matrices  is nothing but the well-known Quadratic Assignment Problem.
The extrema problem of $f_{A,B}$ was worked out by von Neumann back in 1937 \cite{vN}. 
In a rather general setting, the fact that  $f_{A,B}$ is a Morse-Bott function was proven in Lie theoretical terms  in  \cite{DKV}. 
In Section~\ref{section:BMF1}  of the present article we reprove that $f_{A,B}$ is Morse-Bott.
The content is simply something of a tour-de-force of linear algebra and is self-contained. The benefit of this 
presentation is a complete description
of the critical loci of $f_{A,B}$ and their indices.  We show that the critical loci are quotients of products of orthogonal groups, the topology of which is explicitly determined by the combinatorial objects called the perfect fillings of tables with margins prescribed by the multiplicities of the eigenvalues of $A$ and $B$.
Moreover the  index of each connected component of the critical locus can be computed via the corresponding perfect filling.  In the special case when $A$ and $B$ are matrices with distinct eigenvalues, the indices are nothing but  the ``inversion numbers" of permutations. 

Second, we study functions $O(n) \to \RR$ obtained by  restricting a linear function on the vector space of all $n \times n$ matrices.  These functions were also studied in \cite{SS}, where the authors determine which of these functions are Morse.  There are two such functions to which we devote special attention (neither of which is \emph{Morse} for general $n$).  One such is the function $f(X) = \Tr(X)$, originally studied by T.~Frankel in \cite{F}.  In Section~\ref{section:SOn1} we give a self-contained derivation of Frankel's results which seems simpler to us than Frankel's original approach and which is more in the spirit of the rest of our paper.  The gist of these results is that $f$ is Morse-Bott and the critical locus of $f$ is a disjoint union of Grassmannians.  In the Appendix we also give a purely combinatorial discussion of some related results of Frankel.

The other linear function of particular interest is the function $f_{nn} : SO(n) \to \RR$, $f_{nn}(X) = X_{nn}$ obtained by taking the lower right entry of $X$.  (Any entry would do, but this is a convenient choice.)  We show very easily in Section~\ref{section:SOn2} that $f_{nn}$ is Morse-Bott and that the critical locus of $f_{nn}$ is a disjoint union of two copies of $SO(n-1)$.  The methods of ``Morse-Bott cohomology" (which we treat independently in Section~\ref{section:BMC} in an original manner catering to our situation) yield a long exact sequence relating the cohomology of $SO(n)$ to that of $SO(n-1)$.  The novelty of the Morse-theoretic point of view we take is to interpret the connecting maps in this long exact sequence in Morse-theoretic terms.  This allows us to show that these maps are zero with $\FF_2$-coefficients.  We thus obtain a recursive description of the mod $2$ Betti numbers of $SO(n)$ which is easily solved to yield a simple combinatorial formula for these numbers.  Although these numbers can be computed in a variety of ways, we believe our Morse-theoretic computation is quite simple and  natural.  (Together with our combinatorial results in the Appendix, Frankel's study also yields the same Betti number formulae, though that approach is considerably more complicated as it relies on knowing the Betti numbers of Grassmannians, as well as on non-Morse-theoretic results of E.~E.~Floyd.)

\noindent {\bf Acknowledgements.} The work of the last author is partially supported by Bo\u{g}azi\c{c}i University Research Project BAP-17B06P2.

\section{Some Morse-Bott functions on orthogonal groups} \label{section:BMF1}

Fix two symmetric (equivalently, orthogonally diagonalizable) $n \times n$ matrices $A$ and $B$ and consider the smooth function \be f=f_{A,B}:O(n) & \to & \RR \\ \nonumber f(X) & := & \Tr(AXBX^T).\ee  If $A' = Q^TAQ$, $B'=R^TBR$ are orthogonal conjugates of $A$ and $B$, then, using conjugation-invariance of the trace, one sees that $f_{A',B'}(X) = f_{A,B}(QXR^T)$, hence $f_{A',B'}$ is just the composition of $f_{A,B}$ and the automorphism $X \mapsto QXR^T$ of $O(n)$.  Therefore, with no real loss of generality, we will assume for the remainder of this section that \be A & = & \Diag(a_1I_{m_1},\dots,a_s I_{m_s}) \\ \label{BFB} B & = & \Diag(b_1I_{n_1},\dots,b_t I_{n_t}), \ee with $a_i \neq a_k$ for $i \neq k$ and $b_j \neq b_l$ for $j \neq l$.  The eigenvalue multiplicities $m_i$, $n_j$ satisfy \be & \sum_{i=1}^s m_i = \sum_{j=1}^t n_j = n. \ee
Throughout this section, $i$ and $k$ (resp.\ $j$ and $l$) will always denote elements of the set $\{ 1, \dots, s \}$ (resp.\ $\{ 1, \dots, t \}$).  We write $F := \{ X \in O(n) : (Df)(X)=0 \}$ for the critical locus of $f$.

\begin{lem} \label{lem:1} For a symmetric $n \times n$ matrix $S$ and a diagonal $n \times n$ matrix $A$ as above, the following are equivalent: \begin{enumerate}[label=(\roman*), ref=\roman*] \item $AS$ is symmetric. \item $AS=SA$. \item $S=\Diag(S_1,\dots,S_s)$ with each $S_i$ a symmetric $m_i \times m_i$ matrix. \end{enumerate} \end{lem}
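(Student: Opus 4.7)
The plan is to prove the cyclic chain of implications (iii) $\Rightarrow$ (ii) $\Rightarrow$ (i) $\Rightarrow$ (iii). The first two implications are essentially trivial; the third is where all the content lies, and it hinges on the hypothesis that the eigenvalues $a_i$ of $A$ are distinct.

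First I would dispose of (iii) $\Rightarrow$ (ii): if $S = \Diag(S_1,\dots,S_s)$ with block sizes compatible with $A$, then both $AS$ and $SA$ are block diagonal with $i$-th block $a_i S_i$, so they agree. For (ii) $\Rightarrow$ (i), note that $A$ is symmetric (being diagonal) and $S$ is symmetric by hypothesis, so $(AS)^T = S^T A^T = SA$, and this equals $AS$ by (ii).

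The main step is (i) $\Rightarrow$ (iii). Write $S$ in block form $S = (S_{ik})$ where $S_{ik}$ is an $m_i \times m_k$ block. Since $A$ is symmetric, symmetry of $AS$ gives $AS = (AS)^T = S^T A = SA$ (the last equality uses symmetry of $S$). Computing the $(i,k)$-blocks of $AS$ and $SA$ yields $a_i S_{ik} = a_k S_{ik}$, i.e., $(a_i - a_k) S_{ik} = 0$. Since $a_i \neq a_k$ whenever $i \neq k$, this forces $S_{ik} = 0$ for all $i \neq k$, so $S$ is block diagonal with diagonal blocks $S_i := S_{ii}$. The symmetry of each $S_i$ then follows immediately from the symmetry of $S$ (the transpose of a block diagonal matrix is block diagonal with each block transposed).

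I do not anticipate any real obstacle: the entire argument is a direct block-matrix manipulation, and the key observation is simply that multiplication by $A$ on the left (resp.\ right) scales the $i$-th (resp.\ $k$-th) block row (resp.\ column) by $a_i$ (resp.\ $a_k$), so commuting with $A$ forces the off-diagonal blocks to vanish by distinctness of the $a_i$.
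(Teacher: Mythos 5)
Your proof is correct and is exactly the standard block-matrix argument the authors intend (the paper leaves this lemma as an exercise): the chain (iii) $\Rightarrow$ (ii) $\Rightarrow$ (i) $\Rightarrow$ (iii) is complete, and the key step $(a_i - a_k)S_{ik} = 0$ correctly uses the distinctness of the $a_i$. Nothing to add.
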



The proof is left as an exercise. The metric \be \langle M,N \rangle & := & \Tr(M^TN) \ee on $T_X O(n) = \{ M : MX^T+XM^T = 0 \}$ 
is the unique (up to scaling) bi-invariant Riemannian metric on O(n).

\begin{lem} \label{lem:nabla}  The gradient of $f=f_{A,B}$ at a point $X \in O(n)$ is given by \bne{nablaformula} (\nabla f)(X) &=& (AXBX^T-XBX^TA)X. \ene  The following are equivalent: \begin{enumerate}[label=(\roman*), ref=\roman*] \item $X \in F$ (i.e.\ $X$ is a critical point of $f$). \item $AXBX^T$ is symmetric. \item $AXBX^T=XBX^TA$ \item $XBX^T=\Diag(H_1,\dots,H_s)$ with each $H_i$ a symmetric $m_i \times m_i$ matrix. \end{enumerate} \end{lem}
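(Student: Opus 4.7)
The plan is to establish the gradient formula \eqref{nablaformula} first, from which the equivalences (i)--(iv) will fall out immediately from Lemma~\ref{lem:1}. The key observation is that the bi-invariant metric displayed above is nothing but the restriction to $T_X O(n)$ of the ambient Frobenius metric $\langle M,N\rangle = \Tr(M^T N)$ on $M_n(\RR)$; the intrinsic gradient $(\nabla f)(X)$ is therefore the Frobenius-orthogonal projection onto $T_X O(n)$ of the ambient Euclidean gradient of $f$ regarded as a function on all of $M_n(\RR)$.

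First, by the product rule, $Df(X)(M) = \Tr(AMBX^T) + \Tr(AXBM^T)$, and using $A^T = A$, $B^T = B$ together with cyclicity of the trace, both summands equal $\Tr((AXB)^T M)$, so the ambient Frobenius gradient of $f$ at $X$ is the matrix $2AXB$. To project this onto $T_X O(n)$, I would use the identification $T_X O(n) = \so(n) \cdot X$ (immediate from $MX^T + XM^T = 0$) and the corresponding Frobenius-orthogonal decomposition $M_n(\RR) = \so(n) \cdot X \oplus \mathrm{Sym}_n \cdot X$ (orthogonality follows from $\Tr(ZS) = 0$ for $Z \in \so(n)$, $S \in \mathrm{Sym}_n$). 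The projection of $G = 2AXB$ onto the $\so(n)\cdot X$ summand is $\omega X$, where $\omega$ is the antisymmetric part of $GX^T = 2AXBX^T$; since $A$ and $B$ are symmetric and $(XBX^T)^T = XBX^T$, this antisymmetric part is precisely $\omega = AXBX^T - XBX^T A$, and hence the projection is exactly \eqref{nablaformula}.

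With the formula in hand, the equivalences come cheaply. Since $X$ is invertible, $(\nabla f)(X) = \omega X = 0$ iff $\omega = 0$, i.e.\ iff $AXBX^T = XBX^T A$, giving (i)$\Leftrightarrow$(iii). The matrix $S := XBX^T$ is symmetric because $B$ is symmetric and $X$ is orthogonal, so Lemma~\ref{lem:1} applied to this $S$ (in the role of the generic symmetric matrix there) immediately yields (ii)$\Leftrightarrow$(iii)$\Leftrightarrow$(iv). The only place requiring a bit of care is the orthogonal-projection bookkeeping—correctly identifying $T_X O(n)$ as $\so(n)\cdot X$ (rather than $X\cdot \so(n)$, which would lead to a cosmetically different but equivalent formula) and extracting the antisymmetric part of $GX^T$—after which everything else is formal.
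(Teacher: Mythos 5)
Your proof is correct, and it reaches the gradient formula by a slightly different mechanism than the paper. The paper writes down the candidate $(\nabla f)(X) = (AXBX^T - XBX^TA)X$ and \emph{verifies} it: it computes $(Df)(X)(M) = \Tr(AMBX^T + AXBM^T)$ and then checks directly that $\Tr\bigl((\nabla f)(X)^T M\bigr)$ equals this for every $M \in T_XO(n)$, using the tangency constraint $MX^T + XM^T = 0$ and cyclicity of the trace. You instead \emph{derive} the formula: you identify the ambient Frobenius gradient as $2AXB$, decompose $M_n(\RR) = \so(n)\cdot X \oplus \mathrm{Sym}_n\cdot X$ orthogonally, and extract the skew part of $2AXBX^T$, which (using symmetry of $A$, $B$ and orthogonality of $X$) is exactly $AXBX^T - XBX^TA$. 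Both arguments rest on the same two inputs (the formula for $Df$ and the fact that the bi-invariant metric is the restricted Frobenius metric); yours has the advantage of explaining where the formula comes from rather than pulling it from a hat, and it is the same projection technique the paper itself uses later for the linear functions in Section~\ref{section:SOn1} (formula \eqref{gradfSOn}). The paper's verification is marginally shorter and, as stated there, works for arbitrary (not necessarily symmetric) $A$ and $B$ after inserting transposes. Your handling of the equivalences is the same as the paper's, except that you pass through (i)$\Leftrightarrow$(iii) via invertibility of $X$ where the paper reads off (i)$\Leftrightarrow$(ii) directly from the formula; both then invoke Lemma~\ref{lem:1} with $S = XBX^T$, and this is immaterial.
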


\begin{proof}  In fact, more generally, for \emph{any} $n \times n$ matrices $A$ and $B$, the gradient of the function $f : O(n) \to \RR$ defined by $f(X) := \Tr(AXBX^T)$ will be given by $(\nabla f)(X) = (A^T X B^T X^T - XB^T X^T A^T)X$.  To see this, one first computes that the derivative of $f$ at $X$ is given by \bne{Df} (Df)(X)(M) & = & \Tr(AMBX^T+AXBM^T) \ene for $X \in O(n)$, $M \in T_X O(n)$.  We then compute \be \langle (\nabla f)(X),M \rangle &=& \Tr( (\nabla f)^T M) \\ & = & \Tr(BX^TAM-X^TAXBX^TM) \\ & = & \Tr(AMBX^T -AXBX^TMX^T) \\ & = & \Tr(AMBX^T + AXBM^T) \\ & = & (Df)(X)(M) \ee using $MX^T+XM^T=0$ (since $M \in T_X O(n)$) and standard properties of the trace.  The equivalence of the first two conditions is evident from formula \eqref{nablaformula}.  For the equivalence with the other conditions, apply Lemma~\ref{lem:1} with $S=XBX^T$. \end{proof}

\begin{defn} \label{defn:perfectfilling} A \emph{perfect filling} with margins 
$(m_1,\ldots,m_t;n_1,\ldots,n_s)$ is an $s \times t$ matrix $\epsilon$ with entries $\epsilon_{ij}$ in $\NN = \{ 0,1,\dots \}$ satisfying \bne{3} & \sum_{j=1}^t \epsilon_{ij} = m_i {\rm \; for \; each \;} i \in \{ 1, \dots , s \} \\ \label{2} & \sum_{i=1}^s \epsilon_{ij} = n_j {\rm \; for \; each \;} j \in \{ 1, \dots , t \}   \ene \end{defn}

\begin{example} \label{example:distincteigenvalues} If $A$ and $B$ have distinct eigenvalues, then all $m_i$ and $n_j$ are $1$, $s=t=n$, and a perfect filling is an $n \times n$ permutation matrix. \end{example}

Throughout, we set $$ O(\ov{m}) :=  \prod_{i=1}^s O(m_i), \quad O(\ov{n})  :=  \prod_{j=1}^t O(n_j), \quad O(\epsilon)  :=  \prod_{i,j} O(\epsilon_{ij}).$$

\begin{construction} \label{const:main}  Given $Q = (Q[1],\dots,Q[s]) \in O(\ov{m})$, $R = (R[1],\dots,R[t]) \in O(\ov{n})$, and a perfect filling $\epsilon$, we construct an $n \times n$ matrix $X=\Phi_{\epsilon}(Q,R)$ as follows:  We write the $m_i \times m_i$ matrix $Q[i]$ in block form \bne{BFQ} Q[i] & = & \bp Q[i,1] & \cdots & Q[i,t] \ep, \ene with $Q[i,j]$ of size $m_i \times \epsilon_{ij}$.  (This makes sense in light of \eqref{3}.)  Similarly, we write the $n_j \times n_j$ matrix $R[j]$ in block form \bne{BFR} R[j] & = & \bp R[1,j] & \cdots & R[s,j] \ep, \ene with $R[i,j]$ of size $n_j \times \epsilon_{ij}$.  (This makes sense in light of \eqref{2}.)  We let $X[i,j]$ be the $m_i \times n_j$ matrix defined by \bne{XDef} X[i,j] & := & Q[i,j]R[i,j]^T \ene and we define $X$ to be the $n \times n$ matrix written in block form as \bne{BFX} X & = & \bp X[1,1] & \cdots & X[1,t] \\ \vdots & & \vdots \\ X[s,1] & \cdots & X[s,t] \ep. \ene

The Lie group $O(\epsilon)$ acts (smoothly, on the right) on $O(\ov{m}) \times O(\ov{n})$ by setting \be (Q,R) \cdot U & := & (Q \cdot U, R \cdot U) \\ \nonumber Q \cdot U & := & (Q[1] \cdot U, \dots, Q[s] \cdot U) \\ \nonumber R \cdot U & := & (R[1] \cdot U, \dots, R[t] \cdot U) \\ \nonumber Q[i] \cdot U & := & \bp Q[i,1] U[i,1] & \cdots & Q[i,t] U[i,t] \ep \\ \nonumber R[j] \cdot U & := & \bp R[1,j] U[1,j] & \cdots & R[s,j] U[s,j] \ep \ee for $U=(U[i,j]) \in O(\epsilon)$.  In other words: \bne{UQR2} (Q \cdot U)[i,j] & = & Q[i,j]U[i,j] \\ \nonumber (R \cdot U)[i,j] & = & R[i,j] U[i,j]. \ene From \eqref{UQR2}, \eqref{XDef}, and \eqref{BFX}, we find \be \Phi_{\epsilon}(Q,R) & = & \Phi_{\epsilon}(Q \cdot U, R \cdot U). \ee \end{construction}

\begin{rem} \label{rem:Oepsilonaction} Since the columns of $Q[i,j]$ are linearly independent (even orthonormal), the action of $U[i,j]$ on $Q[i,j]$ is free.  The action of $O(\epsilon)$ on $O(\ov{m}) \times O(\ov{n})$ is therefore a free, smooth action of a compact Lie group on a smooth, compact manifold.  The quotient $(O(\ov{m}) \times O(\ov{n}))/O(\epsilon)$ therefore admits a unique smooth manifold structure for which the quotient map is submersive.  The quotient is understood to have this smooth structure throughout. \end{rem}

\begin{prop} \label{prop:X} For $Q$, $R$, $\epsilon$, and $X=\Phi_{\epsilon}(Q,R)$ as in Construction~\ref{const:main}: \begin{enumerate}[label=(\roman*), ref=\roman*] \item \label{X1} $X \in O(n)$ and $X$ is a critical point of $f : O(n) \to \RR$. \item \label{X2} $XBX^T = \Diag(H_1,\dots,H_s)$, where $H_i$ is the $m_i \times m_i$ symmetric matrix given by \be H_i &=& \sum_{j=1}^t b_j Q[i,j]Q[i,j]^T. \ee \item \label{X3} The columns of $Q[i,j]$ form an orthonormal basis for the $b_j$-eigenspace of $H_i$.  In particular, the dimension of this eigenspace is $\epsilon_{ij}$. \end{enumerate} \end{prop}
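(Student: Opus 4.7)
The plan is to verify the three claims essentially by direct block computation using the orthogonality relations for $Q[i]$ and $R[j]$, together with Lemma~\ref{lem:nabla}. The key identities are that $Q[i] = (Q[i,1],\dots,Q[i,t])$ and $R[j] = (R[1,j],\dots,R[s,j])$ being orthogonal matrices translate to
\[ Q[i,j]^T Q[i,k] \;=\; \delta_{jk}\,I_{\epsilon_{ij}}, \qquad R[i,j]^T R[k,j] \;=\; \delta_{ik}\,I_{\epsilon_{ij}}, \]
together with the ``row'' identities $\sum_j Q[i,j]Q[i,j]^T = I_{m_i}$ and $\sum_i R[i,j]R[i,j]^T = I_{n_j}$. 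These will do essentially all the work.

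For part~\eqref{X1}, I would compute $(XX^T)[i,k]$ block by block. Using \eqref{XDef} and \eqref{BFX}, it equals $\sum_j Q[i,j] R[i,j]^T R[k,j] Q[k,j]^T$, which collapses by the second orthogonality identity above to $\delta_{ik} \sum_j Q[i,j] Q[i,j]^T = \delta_{ik} I_{m_i}$. Hence $X \in O(n)$. To show $X \in F$, I would then establish (ii) and invoke the equivalence (i)$\Leftrightarrow$(iv) of Lemma~\ref{lem:nabla}.

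For part~\eqref{X2}, because $B$ is block diagonal, $(BX^T)[j,k] = b_j R[k,j] Q[k,j]^T$, and then
\[ (XBX^T)[i,k] \;=\; \sum_j b_j\, Q[i,j] R[i,j]^T R[k,j] Q[k,j]^T \;=\; \delta_{ik}\sum_j b_j\, Q[i,j]Q[i,j]^T, \]
using the same orthogonality identity as before. This is exactly the claimed form, with $H_i = \sum_j b_j Q[i,j]Q[i,j]^T$, which is visibly symmetric.

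For part~\eqref{X3}, the first orthogonality identity $Q[i,j]^T Q[i,k] = \delta_{jk} I_{\epsilon_{ij}}$ says that the columns of $Q[i,j]$ are orthonormal and that columns from distinct $Q[i,j]$ and $Q[i,k]$ (for $j \ne k$) are mutually orthogonal. Applying $H_i$ to $Q[i,k]$ telescopes: $H_i Q[i,k] = \sum_j b_j Q[i,j]\bigl(Q[i,j]^T Q[i,k]\bigr) = b_k Q[i,k]$. Thus the columns of $Q[i,j]$ are orthonormal $b_j$-eigenvectors; since the $\epsilon_{ij}$ columns across all $j$ together form an orthonormal basis of $\RR^{m_i}$ (being the columns of the orthogonal matrix $Q[i]$) and the $b_j$ are distinct, this orthonormal family must span the entire $b_j$-eigenspace of $H_i$, giving the dimension statement as well. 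No single step looks like a real obstacle; the only thing to be careful about is keeping the block-index bookkeeping straight, particularly the asymmetry between the column-partition of $Q[i]$ (indexed by $j$) and the column-partition of $R[j]$ (indexed by $i$).
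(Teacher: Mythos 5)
Your proposal is correct and follows essentially the same route as the paper's proof: derive the block orthogonality relations from $Q[i]\in O(m_i)$, $R[j]\in O(n_j)$, verify orthogonality of $X$ and the block-diagonal form of $XBX^T$ by direct block multiplication, deduce criticality from Lemma~\ref{lem:nabla}, and identify the eigenspaces via $H_iQ[i,j]=b_jQ[i,j]$ together with a dimension count. The only (immaterial) difference is that you check $XX^T=I$ where the paper checks $X^TX=I$.
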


\begin{proof}  The proof is basically an exercise in multiplying matrices in block form.

The matrix $Q[i] \in O(m_i)$ satisfies $Q[i]Q[i]^T = I_{m_i}$ and $Q[i]^TQ[i] = I_{m_i}$.  Writing these products in terms of the block form \eqref{BFQ}, we find \bne{Q1} \nonumber \sum_{j=1}^t Q[i,j]Q[i,j]^T & = & I_{m_i} \\ \label{Q2} Q[i,j]^TQ[i,l] & = & \left \{ \begin{array}{lll} I_{\epsilon_{ij}}, & \quad & j=l \\ 0_{\epsilon_{ij} \times \epsilon_{il}}, & & j \neq l. \end{array} \right . \ene Similarly writing $R[j] \in O(n_j)$ in the form \eqref{BFR}, we find \bne{R1} \sum_{i=1}^s R[i,j]R[i,j]^T & = & I_{n_j} \\ \label{R2} R[i,j]^TR[k,j] & = & \left \{ \begin{array}{lll} I_{\epsilon_{ij}}, & \quad & i=k \\ 0_{\epsilon_{ij} \times \epsilon_{kj}}, & & i \neq k. \end{array} \right . \ene

To see that $X \in O(n)$, we compute \be X^TX & = & \bp X[1,1]^T & \cdots & X[s,1]^T \\ \vdots & & \vdots \\ X[1,t]^T & \cdots & X[s,t]^T \ep \bp X[1,1] & \dots & X[1,t] \\ \vdots & & \vdots \\ X[s,1] & \cdots & X[s,t] \ep \\ & = & \bp M[1,1] & \cdots & M[1,t] \\ \vdots & & \vdots \\ M[t,1] & \cdots & M[t,t] \ep, \ee where $M[j,l]$ is the $n_j \times n_l$ matrix given by \be M[j,l] & = & \sum_{i=1}^s X[i,j]^T X[i,l]. \ee  Using the definition \eqref{XDef} of the $X[i,j]$ and \eqref{Q2} and \eqref{R1} above, we compute \be M[j,l] & = & \sum_{i=1}^s R[i,j] Q[i,j]^T Q[i,l] R[i,l]^T \\ & = & \left \{ \begin{array}{lll} \sum_{i=1}^s R[i,j]R[i,j]^T, & \quad & j=l \\ 0_{n_j \times n_l}, & & j \neq l \end{array} \right . \\ & = & \left \{ \begin{array}{lll} I_{n_j}, & \quad & j=l \\ 0_{n_j \times n_l}, & & j \neq l. \end{array} \right . \ee  This proves that $X \in O(n)$.

\emph{Observe:}  If it were known that the $Q[i]$ are orthonormal (so \eqref{Q2} holds) and $X$ is orthonormal, then the above computation would show that the $R[j]$ satisfy \eqref{R1} and are hence orthonormal.

For \eqref{X2}, we compute \be XBX^T & = & \bp X[1,1] & \cdots & X[1,t] \\ \vdots & & \vdots \\ X[s,1] & \cdots & X[s,t] \ep \bp b_1 X[1,1]^T & \cdots & b_1 X[s,1]^T \\ \vdots & & \vdots \\ b_t X[1,t]^T & \cdots & b_t X[s,t]^T \ep \\ & = & \bp D[1,1] & \cdots & D[1,s] \\ \vdots & & \vdots \\ D[s,1] & \cdots & D[s,s] \ep, \ee where $D[i,k]$ is the $m_i \times m_k$ matrix given by \be D[i,k] & = & \sum_{j=1}^t b_j X[i,j]X[k,j]^T . \ee  Expanding this out using the definition \eqref{XDef} of the $X[i,j]$ and \eqref{Q2}, we find \be D[i,k] & = & \left \{ \begin{array}{lll} \sum_{j=1}^t b_j Q[i,j]Q[i,j]^T , & \quad & i=k \\ 0_{m_i \times m_k}, & & i \neq k. \end{array} \right .  \ee  This proves \eqref{X2}.  We have \eqref{X2} with $X\in O(n) \implies$\eqref{X1} by Lemma~\ref{lem:nabla}.

For \eqref{X3}, we compute, using \eqref{X2} and \eqref{Q2}: \be H_i Q[i,j] & = & \sum_{l=1}^t b_l Q[i,l] Q[i,l]^T Q[i,j] \\ & = & b_j Q[i,j]. \ee  This shows that the $\epsilon_{ij}$ orthonormal columns of $Q[i,j]$ are all in the $b_j$ eigenspace of $H_i$, so this eigenspace has dimension $\geq \epsilon_{ij}$.  Since this is true for each $j$, $H_i$ is an $m_i \times m_i$ matrix, and \eqref{3} holds, this last inequality must actually be an equality for every $j$ by basic linear algebra.

\end{proof}

\begin{thm} \label{thm:main} Construction~\ref{const:main} yields a diffeomorphism \be \Phi = \coprod_{\epsilon} \Phi_\epsilon : \coprod_\epsilon ( O(\ov{m}) \times O(\ov{n}))/O(\epsilon) & \to & F \ee onto the critical locus $F$ of $f$ (the coproduct is over perfect fillings $\epsilon$). 
\end{thm}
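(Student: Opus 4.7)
The plan is to exhibit a set-theoretic bijection between the two sides directly, using Lemma~\ref{lem:nabla} and Proposition~\ref{prop:X}, and then upgrade to a diffeomorphism by exploiting that the eigenvalues $b_j$ are pairwise distinct.

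For \emph{surjectivity}, given $X \in F$, Lemma~\ref{lem:nabla} gives $XBX^T = \Diag(H_1,\ldots,H_s)$ with each $H_i$ symmetric of size $m_i$. Since $XBX^T$ is orthogonally similar to $B$, its eigenvalues (with multiplicities) are those of $B$; in particular each eigenvalue of each $H_i$ is some $b_j$, and the total multiplicity of $b_j$ across the $H_i$ equals $n_j$. Let $\epsilon_{ij}$ be the multiplicity of $b_j$ in $H_i$; then \eqref{3} and \eqref{2} hold, so $\epsilon$ is a perfect filling. Orthogonally diagonalize each $H_i$ by choosing, for each $j$, an orthonormal basis of its $b_j$-eigenspace, and make these the columns of $Q[i,j]$, producing $Q[i] \in O(m_i)$. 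Reading off the blockwise identity $H_i \, X[i,j] = b_j X[i,j]$ from $\Diag(H_1,\ldots,H_s) X = XBX^T X = XB$ shows that the columns of $X[i,j]$ lie in the $b_j$-eigenspace of $H_i$; hence $Q[i,j]Q[i,j]^T X[i,j] = X[i,j]$, and setting $R[i,j] := X[i,j]^T Q[i,j]$ achieves $Q[i,j] R[i,j]^T = X[i,j]$. The ``Observe'' paragraph in the proof of Proposition~\ref{prop:X} then shows the assembled $R[j]$ lies in $O(n_j)$, and by construction $\Phi_\epsilon(Q,R) = X$.

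For \emph{injectivity}, if $\Phi_\epsilon(Q,R) = \Phi_{\epsilon'}(Q',R') = X$, then by Proposition~\ref{prop:X}\eqref{X3} both $\epsilon_{ij}$ and $\epsilon'_{ij}$ equal the dimension of the $b_j$-eigenspace of $H_i$, which depends only on $X$; hence $\epsilon = \epsilon'$. The two orthonormal bases $Q[i,j]$ and $Q'[i,j]$ of the same eigenspace differ by a unique $U[i,j] \in O(\epsilon_{ij})$, and left-multiplying $X[i,j] = Q[i,j] R[i,j]^T = Q'[i,j] (R'[i,j])^T$ by $Q[i,j]^T$ forces $R'[i,j] = R[i,j] U[i,j]$. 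So $(Q',R') = (Q,R) \cdot U$ in $O(\epsilon)$.

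For the \emph{diffeomorphism} claim, $\Phi_\epsilon$ is polynomial in the entries of $Q$ and $R$, hence smooth, and its $O(\epsilon)$-invariance means it descends to a smooth map on the quotient carrying the structure from Remark~\ref{rem:Oepsilonaction}. The main obstacle is smoothness of the inverse. The key observation is that on the image of $\Phi_\epsilon$ the matrix $H_i(X)$ has fixed spectrum $\{b_1,\ldots,b_t\}$ with fixed multiplicities $\epsilon_{ij}$; since the $b_j$ are distinct, the orthogonal projector onto the $b_j$-eigenspace of $H_i$ is a smooth function of $X$ (for example via the contour integral $\tfrac{1}{2\pi i} \oint_{|z-b_j|=\delta}(zI - H_i(X))^{-1}\,dz$ for small $\delta$). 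A standard local smooth section of the orthonormal-frame bundle of this eigenspace yields a smooth local lift $X \mapsto Q[i,j]$, and then $R[i,j] = X[i,j]^T Q[i,j]$ is automatically smooth; passing to the $O(\epsilon)$-quotient kills the ambiguity of the local choice and gives a smooth inverse. An alternative route that avoids spectral calculus is to verify that $\Phi$ is an immersion by reducing, via equivariance under the left $O(\overline{m}) \times O(\overline{n})$-action, to a tangent-space computation at a single base point of each component, and then to invoke that a smooth injective immersion from a compact manifold is an embedding.
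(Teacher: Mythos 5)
Your construction of the set-theoretic bijection coincides with the paper's: both directions use Lemma~\ref{lem:nabla} to get the block form of $XBX^T$, similarity to $B$ to see that $\epsilon$ is a perfect filling, the spectral theorem for each $H_i$ to build $Q$, the ``Observe'' remark to get $R[j]\in O(n_j)$, and Proposition~\ref{prop:X}\eqref{X3} for injectivity up to the $O(\epsilon)$-action. Where you genuinely diverge is the smoothness of the inverse. The paper proves directly, by a bare-hands block-matrix computation (its Claim 3 and the surrounding manipulations with \eqref{Q2}, \eqref{R2}), that $\Ker (D\Phi_\epsilon)(Q,R)$ is exactly the tangent space to the $O(\epsilon)$-orbit, and then uses compactness to upgrade the resulting injective immersion to a diffeomorphism. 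Your primary route instead builds a smooth local left inverse from the Riesz spectral projectors of $H_i(X)$, exploiting that the $b_j$ are distinct and the multiplicities are locally constant on $F_\epsilon$; this is a legitimately different and arguably more conceptual argument, at the price of invoking spectral calculus and of one wrinkle you should address: the candidate inverse $X\mapsto (Q(X),R(X))$ extends smoothly to an ambient neighborhood $U$ of $F_\epsilon$ in $O(n)$, but for $X\in U\setminus F$ the matrices $R[j](X)$ need not be orthogonal, so the extension does not literally land in $(O(\ov{m})\times O(\ov{n}))/O(\epsilon)$. The clean fix is to use the smooth ambient maps $X\mapsto Q[i,j](X)$ and $X\mapsto X[i,j]^TQ[i,j](X)$ only to differentiate along a curve $\Phi_\epsilon(Q(u),R(u))$ with $X'(0)=0$, which forces $(M,N)$ into the orbit directions; once that is spelled out, the immersion property follows and compactness finishes the proof exactly as in the paper. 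Your alternative route (equivariance under the left $O(\ov{m})\times O(\ov{n})$-action reducing to one base point per component) is a sensible streamlining of the paper's method, but note that it defers rather than performs the one computation that carries the real content, namely that the kernel of the derivative at that base point is no larger than the orbit tangent space.
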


\begin{proof}  Fix $X \in F$.  Set $H := XBX^T$.  Since $X \in F$, \bne{BFH} H & = & \Diag(H_1,\dots,H_s), \ene with $H_i$ an $m_i \times m_i$ symmetric matrix (Lemma~\ref{lem:nabla}).  Let $E_{ij} \subseteq \RR^{m_i}$ be the $b_j$-eigenspace of $H_i$ and $\epsilon_{ij}$ be its dimension.  

\noindent {\bf Claim 1:} $\epsilon=(\epsilon_{ij})$ is a perfect filling.

Fix any $i$.  From the block form \eqref{BFH} of $H$ we see that any eigenvalue of $H_i$ must also be an eigenvalue of $H$.  Since $H$ is similar to $B$ the eigenvalues of $H$ are the $b_j$.  Therefore the eigenvalues of $H_i$ are \emph{among} the $b_j$.  Since the symmetric $m_i \times m_i$ matrix $H_i$ is diagonalizable, the equality $\sum_j \epsilon_{ij} = m_i$ follows.  Now fix any $j$.  From the block form \eqref{BFH} of $H$ we see that the $b_j$-eigenspace $E_j$ of $H$ is the direct sum, over $i$, of the $E_{ij}$.  Since $H$ is similar to $B$ we have $\dim E_j = n_j$.  The equality $\sum_i \epsilon_{ij} = n_j$ follows.  This proves the claim.

Now \emph{choose}, for each $i,j$, an $m_i \times \epsilon_{ij}$ matrix $Q[i,j]$ whose columns form an orthonormal basis for $E_{ij} \subseteq \RR^{m_i}$.  By definition of $H$ (and the fact that $X \in O(n)$), we have \bne{HXXB} HX & = & XB. \ene  Writing $H$, $X$, and $B$ in the (respective) block forms \eqref{BFH}, \eqref{BFX}, \eqref{BFB} and expanding out \eqref{HXXB}, we find \be H_i X[i,j] & = & b_j X[i,j] \ee for all $i,j$.  Therefore each column of $X[i,j]$ is in $E_{ij}$.  Since the columns of $Q[i,j]$ form a basis for $E_{ij}$ there is a \emph{unique} $n_j \times \epsilon_{ij}$ matrix $R[i,j]$ such that \bne{usualway} X[i,j] &=& Q[i,j]R[i,j]^T.\ene  Define $Q[i]$ and $R[j]$ from the $Q[i,j]$ and $R[i,j]$ using the usual block forms \eqref{BFQ}, \eqref{BFR}.

\noindent {\bf Claim 2:} $Q[i] \in O(m_i)$ for each $i$ and $R[j] \in O(n_j)$ for each $j$.

The $Q[i,j]$ have orthonormal columns forming a basis for $E_{ij}$, so to show that $Q[i] \in O(m_i)$, it is enough to prove that $E_{ij} \perp E_{il}$ for $j \neq l$.  This holds by basic linear algebra since $E_{ij}$ and $E_{il}$ are distinct eigenspaces of the \emph{symmetric} matrix $H_i$.  Since $Q[i] \in O(m_i)$ for each $i$, $X \in O(n)$, and we have the relationship \eqref{usualway}, we have $R[j] \in O(n_j)$ by the \emph{observation} made in the proof of Proposition~\ref{prop:X}\eqref{X1}.  This proves Claim 2.

By Claims 1 and 2, $Q$, $R$, and $\epsilon$ constructed above are as in Construction~\ref{const:main}; clearly we have $X=\Phi_{\epsilon}(Q,R)$ (compare \eqref{usualway} and \eqref{XDef}).  Suppose we also have $X=\Phi_{\epsilon'}(Q',R')$ with $Q'$, $R'$, $\epsilon'$ as in Construction~\ref{const:main}.  If we define the $m_i \times m_i$ matrices $H_i$ from $X$ and $B$ as at the beginning of this proof, then by Proposition~\ref{prop:X}\eqref{X3}, the columns of $Q[i,j]$ form an orthonormal basis for the $b_j$-eigenspace $E_{ij} \subseteq \RR^{m_i}$ of $H_i$, as do the columns of $Q'[i,j]$.  In particular, $Q[i,j]$ and $Q'[i,j]$ must have the same number of columns, so we must have $\epsilon_{ij} = \epsilon'_{ij}$.  Furthermore, there is a unique $U[i,j] \in O(\epsilon_{ij})$ with $Q'[i,j]=Q[i,j]U[i,j]$.  Combining this with the fact that $$X[i,j] = Q[i,j]R[i,j]^T = Q'[i,j]R'[i,j]^T$$ (formula \eqref{XDef} in Construction~\ref{const:main}), we deduce \bne{cancel} Q'[i,j] U[i,j]^T R[i,j]^T &=& Q'[i,j] R'[i,j]^T. \ene  By \eqref{Q2} (with $Q$ replaced by $Q'$), we can multiply \eqref{cancel} on the left by $Q'[i,j]^T$ to cancel the $Q'[i,j]$'s on the left of each side of \eqref{cancel}, then transpose to find $R'[i,j] = R[i,j]U[i,j]$.  This proves $(Q',R') = (Q,R) \cdot U$, where $U=(U[i,j]) \in O(\epsilon)$.

The above results (and Proposition~\ref{prop:X}\eqref{X1}) demonstrate that $\Phi$ (which is clearly smooth) is bijective.  It must therefore be a homeomorphism since its domain is compact and its codomain is Hausdorff.  The domain of $\Phi$ is a smooth manifold (Remark~\ref{rem:Oepsilonaction}), so to conclude that it is a diffeomorphism onto its image $F$, it remains only to prove that the derivative of each $\Phi_{\epsilon}$ is injective when we view $\Phi_{\epsilon}$ as a map $(O(\ov{m}) \times O(\ov{n})) / O(\epsilon) \to O(n)$.  Equivalently, if we view $\Phi_{\epsilon}$ simply as a map $\Phi_{\epsilon} : O(\ov{m}) \times O(\ov{n}) \to O(n)$, then we must prove that the kernel of each derivative $(D \Phi_{\epsilon})(Q,R)$ is precisely the tangent space (at $(Q,R)$) to the $O(\epsilon)$-orbit of $(Q,R)$.  The latter is the image of the Lie derivative (derivative at the identity) of the orbit map $U \mapsto (Q,R) \cdot U$ from $O(\epsilon)$ to $O(\ov{m}) \times O(\ov{n})$.  

Now we compute these derivatives.  For $(M,N) \in T_Q O(\ov{m}) \oplus T_R O(\ov{n})$, we find \bne{DPhi} (D \Phi_{\epsilon})(Q,R)(M,N) & = & \bp D[1,1] & \cdots & D[1,t] \\ \vdots & & \vdots \\ D[s,1] & \cdots & D[s,t] \ep, \ene where \bne{Dij} D[i,j] & = & M[i,j] R[i,j]^T + Q[i,j]N[i,j]^T. \ene  (We have broken the components $M[i]$ of $M$ into blocks $M[i,j]$ of size $m_i \times \epsilon_{ij}$ in the same way we broke up the $Q[i]$ in \eqref{BFQ}.  Similarly, we have broken the $N[j]$ into blocks $N[i,j]$ of size $n_j \times \epsilon_{ij}$, just as we broke up the $R[i]$ in \eqref{BFR}.)  The aforementioned Lie derivative takes $$P=(P[i,j]) \in T_I O(\epsilon) = \bigoplus_{i,j} \so(\epsilon_{ij}) $$ to $(M,N)$ where, when cut into blocks in the usual manner, \bne{DLie} M[i,j] & = & Q[i,j]P[i,j] \\ \label{DLie2} N[i,j] & = & R[i,j]P[i,j]. \ene  (Notice that, from \eqref{Dij} and skew-symmetry of the $P[i,j]$, such an $(M,N)$ will be in the kernel of $(D \Phi_{\epsilon})(Q,R)$.  This is, of course, just an ``infinitesimal" version of the known $O(\epsilon)$-invariance of $\Phi_{\epsilon}$.)  Given any $(M,N) \in T_Q O(\ov{m}) \oplus T_R O(\ov{n})$ (not necessarily in the kernel of $(D \Phi_{\epsilon})(Q,R)$), we can define $\epsilon_{ij} \times \epsilon_{ij}$ matrices $P[i,j]$ by \bne{Pij} P[i,j] & := & Q[i,j]^T M[i,j]. \ene

\noindent {\bf Claim 3:} The matrices $P[i,j]$ in \eqref{Pij} are skew-symmetric.

To see this, note that \bne{MQT} M[i]Q[i]^T + Q[i]M[i]^T & = & 0 \ene since $M[i] \in T_{Q[i]} O(m_i)$.  Writing \eqref{MQT} in terms of the usual ``block forms," we see that \eqref{MQT} is equivalent to \bne{BFMQT} \sum_j M[i,j]Q[i,j]^T + Q[i,j]M[i,j]^T & = & 0. \ene  Multiplying \eqref{BFMQT} on the right by $Q[i,l]$ and on the left by $Q[i,l']^T$ and using \eqref{Q2}, we find \be Q[i,l']^T M[i,l] + M[i,l']^T Q[i,l] & = & 0, \ee  Setting $l=j$ and $l'=j$  here proves Claim 3.  For later use, let us make an analogous computation.  The condition \be  N[j]R[j]^T + R[j]N[j]^T & = & 0 \ee for $N[j]$ to be in $T_{R[j]} O(n_j)$ is equivalent to \be \sum_i N[i,j]R[i,j]^T + R[i,j]N[i,j]^T & = & 0. \ee  Multiplying on the right by $R[k,j]$ and on the left by $R[k',j]^T$ and using \eqref{R2} yields \bne{seven} R[k',j]^T N[k,j]+N[k',j]^T R[k,j] & = & 0. \ene

Now, if $(M,N)$ is actually in the kernel of $(D \Phi_{\epsilon})(Q,R)$ (i.e.\ all the $D[i,j]$ in \eqref{Dij} are zero), then of course we claim that $(M,N)$ is the Lie derivative evaluated at the $P[i,j]$ defined in \eqref{Pij}.  To see this, we need to establish \eqref{DLie} and \eqref{DLie2} when the $P[i,j]$ are defined by \eqref{Pij}.  For \eqref{DLie}, we just multiply \eqref{Pij} on the left by $Q[i,j]$ and use \eqref{Q2}.  (We don't need to know $(D \Phi_\epsilon)(Q,R)(M,N)=0$ yet.)  For \eqref{DLie2}, we multiply $D[i,j]=0$ (using formula \eqref{Dij} for $D[i,j]$) on the left by $Q[i,j]^T$ and use \eqref{Q2} and \eqref{Pij} to find \be P[i,j] R[i,j]^T + N[i,j]^T &=& 0. \ee  By Claim 3 we can substitute $-P[i,j]^T$ for $P[i,j]$ here to obtain (the transpose of) \eqref{DLie2}.  
\end{proof}

\begin{rem} 
Though established by our results, it is perhaps not obvious from the formulae \eqref{DPhi} and \eqref{Dij} that $(D \Phi_{\epsilon})(Q,R)(M,N)$ is actually in $T_X O(n)$ ($X = \Phi_\epsilon(Q,R)$).  (The fact that this is true is an infinitesimal version of Proposition~\ref{prop:X}\eqref{X1}.) Since we will need some of the calculations later anyway, let us explain this point.  The condition \be VX^T+XV^T = 0 \ee for an $n \times n$ matrix $V$ to be in $T_X O(n)$, when $V$ is broken into $m_i \times n_j$ blocks $V[i,j]$ (as we broke $X$ into such blocks in \eqref{BFX}), is equivalent to the equations \be \sum_j V[i,j]X[k,j]^T + X[i,j]V[k,j]^T & = & 0 \ee for every $i,k$.  To see that this holds when $V=(D \Phi)(Q,R)(M,N)$ (so $V[i,j]$ is the $D[i,j]$ in \eqref{Dij}), we compute (using \eqref{XDef}) \bne{sumoverj} & & \sum_j D[i,j]X[k,j]^T + X[i,j]D[k,j]^T \\ \nonumber & = & \sum_j M[i,j]R[i,j]^TR[k,j]Q[k,j]^T + Q[i,j]N[i,j]^T R[k,j] Q[k,j]^T \\ \nonumber & & + Q[i,j]R[i,j]^TR[k,j]M[k,j]^T + Q[i,j]R[i,j]^T N[k,j]Q[k,j]^T . \ene  For each $j$, the second and forth terms in the sum over $j$ cancel by \eqref{seven}.  Furthermore, by \eqref{R2}, the first and third terms are also zero when $i \neq k$ and, when $i=k$, \eqref{sumoverj} is nothing but the sum known to be zero by \eqref{BFMQT}. \end{rem}

Our next task is to describe the Hessian of $f$ at a critical point $X \in F$.  We identify $T_X O(n)$ with the space $\so(n) = T_I O(n)$ of skew-symmetric matrices via the usual isomorphism \bne{TangentSpaceIso} X : \so(n) & \to & T_X O(n) = \{ M : MX^T+XM^T=0 \} \\ \nonumber E & \mapsto & XE, \ene thus we view the Hessian $H(f,X)$ of $f$ at $X$ as a quadratic form on $\so(n)$.

\begin{lem} \label{lem:Hessian} Viewing the Hessian $H=H(f,X)$ of $f$ at a critical point $X \in F$ as a quadratic form on $\so(n)$ as above, it is given by \bne{Hessianformula} H(E,N) &=& \Tr(AX[E,[N,B]]X^T) \ene for $E,N \in \so(n)$, where $[U,V] := UV-VU$.\footnote{It would be incorrect to refer to $[ \; , \; ]$ as the ``Lie bracket on $\so(n)$" because, in \eqref{Hessianformula}, we apply it to matrices that aren't in $\so(n)$.} \end{lem}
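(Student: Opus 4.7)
The plan is to compute $H(f,X)$ directly from the definition of the Hessian at a critical point. Recall that at a critical point $X$, for any smooth map $\sigma : (-\epsilon,\epsilon)^2 \to O(n)$ with $\sigma(0,0)=X$, the value $(\partial_s \partial_t)(f \circ \sigma)|_{(0,0)}$ depends only on the tangent vectors $\partial_s \sigma(0,0)$ and $\partial_t \sigma(0,0)$ and equals the Hessian evaluated at those vectors. Under the identification \eqref{TangentSpaceIso}, I would therefore choose the natural two-parameter family
\begin{eqnarray*}
\sigma(s,t) & := & X \exp(sE) \exp(tN),
\end{eqnarray*}
for $E,N \in \so(n)$, which satisfies $\sigma(0,0)=X$, $\partial_s \sigma(0,0) = XE$, $\partial_t \sigma(0,0) = XN$.

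The next step is to unpack $f(\sigma(s,t))$. Using $(e^{sE})^T = e^{-sE}$ and $(e^{tN})^T = e^{-tN}$ (valid since $E,N$ are skew-symmetric), one gets
\begin{eqnarray*}
(f \circ \sigma)(s,t) & = & \Tr \bigl( A X \, e^{sE} e^{tN} B e^{-tN} e^{-sE} \, X^T \bigr).
\end{eqnarray*}
The only nontrivial input is the standard conjugation identity $\frac{d}{dt}(e^{tN} C e^{-tN}) = e^{tN} [N,C] e^{-tN}$ for any matrix $C$. Applying this with $C = B$ gives
\begin{eqnarray*}
\partial_t (f \circ \sigma)(s,t) & = & \Tr \bigl( A X \, e^{sE} e^{tN} [N,B] e^{-tN} e^{-sE} \, X^T \bigr).
\end{eqnarray*}
Setting $t=0$ leaves $\Tr(AX e^{sE} [N,B] e^{-sE} X^T)$; applying the same conjugation identity once more, now to $C = [N,B]$, and evaluating at $s=0$, I would obtain
\begin{eqnarray*}
\partial_s \partial_t (f \circ \sigma)(0,0) & = & \Tr \bigl( A X [E,[N,B]] X^T \bigr),
\end{eqnarray*}
which is the desired formula \eqref{Hessianformula}.

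There is no substantive obstacle; the argument is a direct manipulation with matrix exponentials once the right two-parameter family is chosen. The one sanity check worth recording is that the formula is symmetric in $(E,N)$, which is forced anyway by symmetry of the Hessian but is not manifest from the right-hand side. Symmetry can be verified by hand using the Jacobi identity, which reduces the difference $\Tr(AX[E,[N,B]]X^T) - \Tr(AX[N,[E,B]]X^T)$ to $-\Tr(AX[B,[E,N]]X^T) = -\Tr([X^T A X, B][E,N])$; this vanishes because Lemma~\ref{lem:nabla} applied to the critical point $X$ gives $AXBX^T = XBX^T A$, and multiplying by $X^T$ on the left and $X$ on the right shows $X^T A X$ and $B$ commute. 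This symmetry check also serves as a double-check that the natural curve $\sigma$ we chose does genuinely compute the Hessian, rather than some connection-dependent object.
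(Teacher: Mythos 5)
Your proof is correct and takes a genuinely different route from the paper's. The paper computes the Hessian abstractly as the ``vertical'' component of the derivative of $df$, viewed as a section of the (trivialized) cotangent bundle $T^*O(n) = O(n) \times \so(n)^*$: it first works out the map $(df)^{\mathrm{ver}} : O(n) \to \so(n)^*$, then differentiates it at $X$ to get $H(X)(E,N) = \Tr(AXE[N,B]X^T + AX[N,B]E^TX^T)$, and finally uses $E^T = -E$ to fold this into the nested commutator $\Tr(AX[E,[N,B]]X^T)$. Your argument instead plugs in the concrete two-parameter family $\sigma(s,t) = X\exp(sE)\exp(tN)$ and applies the conjugation identity $\frac{d}{dt}(e^{tN}Ce^{-tN}) = e^{tN}[N,C]e^{-tN}$ twice; this is shorter and arrives at the nested-commutator form directly rather than via a skew-symmetry simplification. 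The trade-off is that your route relies on the standard (but genuinely needed) fact that, at a critical point, $\partial_s\partial_t(f\circ\sigma)|_{(0,0)}$ depends only on the first-order jet of $\sigma$ — which you correctly cite up front — whereas the paper's approach is set up to apply even at non-critical $X$ (where the resulting bilinear form is then not symmetric, as examined in the remark immediately following the lemma), and the same machinery is reused verbatim in the proof of the later lemma on linear $f_A$. Your closing symmetry check via the Jacobi identity and $[X^TAX, B]=0$ is essentially the content of that remark, just phrased with $-[B,[E,N]]$ in place of $[[E,N],B]$.
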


\begin{proof}  Let $Y$ be any smooth manifold, $f : Y \to \RR$ a smooth function.  View the $1$-form $df$ as a section $df : Y \to T^* Y$ of the cotangent bundle $\pi : T^* Y \to Y$.  The derivative of $df$ at a point $y \in Y$ is a linear map \bne{Ddf} (D(df))(y) : T_y T \to T_{(df)(y)} T^* Y.\ene  Now, for any vector bundle $V$ over $Y$, the restriction of $T V$ (the tangent bundle of the ``total space" of $V$) to the zero section is \emph{canonically} identified with $TY \oplus V$ by using the derivative of the zero section as a section of the derivative of the projection $\pi : V \to Y$.  (The relative tangent bundle of $\pi$ is canonically identified with $\pi^*V$.)  If $y$ is a critical point of $f$, then $(df)(y)$ lies in the zero section of $T^*Y$, hence, via the aforementioned canonical isomorphism, \eqref{Ddf} may be viewed as a map \bne{Ddf2} (D(df))(y) : T_y Y \to T_y Y \oplus T^*_y Y.\ene  The $T_y Y \to T_y Y$ component of \eqref{Ddf2} is the identity (since $df$ is a section of $\pi$); the other (``vertical") component $H : T_y Y \to T_y^* Y$ is a map from a vector space to its dual which is readily seen to be self-dual (this reflects the fact that $d^2 f = 0$).  As the notation suggests, $H$ is the Hessian of $f$ at $y$.  (A quadratic form on a vector space $V$ is the same thing as a self-dual linear map $V \to V^*$.)

In the situation at hand ($Y=O(n)$), the tangent bundle of $Y$ (hence also the cotangent bundle of $Y$) is trivial, so the situation simplifies a bit.  We trivialize $T O(n)$ by identifying $\so(n)$ and $T_X O(n)$ via the isomorphism \eqref{TangentSpaceIso}.  (This isomorphism arises as the Lie derivative of the ``left multiplication by $X$" map $X : O(n) \to O(n)$.)  With this understanding, $df : O(n) \to T^* O(n) = O(n) \times \so(n)^*$ is given by the identity in the first component (it is a section!) and by \bne{dfver} (df)^{ver} : O(n) & \to & \so(n)^* \\ \nonumber (df)^{ver}(X)(N) &=& \Tr(AXNBX^T+AXBN^TX^T) \\ \nonumber &=& \Tr(AX[N,B]X^T) \ene in the ``vertical direction".  (We are just rewriting \eqref{Df} in terms of the isomorphism \eqref{TangentSpaceIso} and using the fact that $N \in \so(n)$ is skew-symmetric.)  The derivative of $(df)^{ver}$ at $X \in O(n)$ is readily calculated to be \bne{Ddfver} (D(df)^{ver})(X) : T_X O(n) & \to & \so(n)^* \\ \nonumber (D(df)^{ver})(X)(M)(N) & = & \Tr( AMNBX^T+AXNBM^T \\ \nonumber & & + AMBN^TX^T + AXBN^T M^T ) \\ \nonumber &=& \Tr( AM[N,B]X^T+AX[N,B]M^T). \ene  (We have used the fact that $N \in \so(n)$ is skew-symmetric for the last equality.)  If we take into account the isomorphism \eqref{TangentSpaceIso}, then \eqref{Ddfver} becomes the map \bne{Hessian} H(X) : \so(n) & \to & \so(n)^* \\ \nonumber  H(X)(E,N) & = & \Tr(AXE[N,B]X^T+AX[N,B]E^TX^T). \ene  If $X$ is a critical point, then comparing this calculation with the general definition of the Hessian in the previous paragraph, we see that \eqref{Hessian} \emph{is} the Hessian of $f$ at $X$, as the notation suggests.  The proof is completed by using the fact that $E \in \so(n)$ is skew-symmetric to rewrite \eqref{Hessian} as in \eqref{Hessianformula}. \end{proof}

\begin{rem} \label{rem:Hessian} For \emph{any} $X$, formula \eqref{Hessianformula} defines a bilinear form $H$ on $\so(n)$, but it is not generally symmetric when $X$ is not a critical point.  Symmetry when $X$ \emph{is} a critical point follows from general principles (equality of mixed second partials, in local coordinates) and can also be seen directly by computing \be H(E,N)-H(N,E) &=& \Tr( AXENBX^T+AXBNEX^T \\ & &  - AXNEBX^T - AXBENX^T) \\ &= & \Tr( AX[E,N]BX^T-AXB[E,N]X^T ) \\ &=& \Tr(AX[[E,N],B]X^T) \\ &=& (Df)(X)([E,N]), \ee using the fact that $E,N \in \so(n)$ are skew-symmetric.  \end{rem}

\begin{defn} \label{defn:Epq}  For $1 \leq p < q \leq n$, let $\EE(p,q) \in \so(n)$ be the skew-symmetric $n \times n$ matrix whose $(p,q)$ entry is $-1$, whose $(q,p)$ entry is $1$, and whose other entries are zero.  We refer to the basis for $\so(n)$ consisting of the $\EE(p,q)$ as the \emph{standard basis} for $\so(n)$.  For distinct $p,q \in \{ 1,\dots, n\}$, let $\FF(p,q)$ be the symmetric $n \times n$ matrix whose $(p,q)$ and $(q,p)$ entries are $1$ and whose other entries are zero.  Note $\FF(p,q) = \FF(q,p)$.  For $p \in \{ 1,\dots,n \}$, let $\DD(p)$ be the $n \times n$ matrix whose $(p,p)$ entry is $1$ and whose other entries are zero.  For $\sigma \in \SG_n$, let $P_\sigma$ be the matrix whose $i^{\rm th}$ column is column $\sigma(i)$ of the identity matrix. \end{defn}

For later use, we record the following formulae involving the matrices of Definition~\ref{defn:Epq}: \bne{EijBcommutator} [ \EE(p,q),B ] & = &  (B_{pp}-B_{qq}) \FF(p,q) \\ \label{EijFklcommutator} [ \EE(p,q) , \FF(u,v) ] &=& \left \{ \begin{array}{lll} 0, & \quad & \{ p,q \} \cap \{ u,v \} = \emptyset \\ \FF(q,v), & & p=u, q \neq v \\ \FF(q,u), & & p=v, q \neq u \\ - \FF(p,u), & & q=v, p \neq u \\ -\FF(p,v), & & q=u, p \neq v \\ 2 \DD(q) - 2 \DD(p), & & \{ p,q \} = \{ u,v \} \end{array} \right . \\ \label{Psigmaconj} P_\sigma \DD(p) P_\sigma^T &=& \DD(\sigma(p)) \\ \label{lastformula} P_\sigma \FF(p,q) P_\sigma^T & = & \FF(\sigma(p),\sigma(q)). \ene

Our first application of these computations is to establish that $f$ is Morse-Bott:

\begin{thm} \label{thm:bott}  For $X \in F \subseteq O(n)$ and $M \in T_X O(n)$, the following are equivalent: \begin{enumerate}[label=(\roman*), ref=\roman*] \item \label{Bott1} $M \in T_X F \subseteq T_X O(n)$. \item \label{Bott2} $AMBX^T+AXBM^T$ is symmetric. \item \label{Bott3} $MBX^T+XBM^T = \Diag(W_1,\dots,W_s)$ with each $W_i$ an $m_i \times m_i$ symmetric matrix. \item \label{Bott4} $AMBX^T+AXBM^T = MBX^TA+XBM^TA$. \item \label{Bott5} $BX^TAM+BM^TAX = X^TAMB+M^TAXB$. \item \label{Bott6} $BX^TAM+BM^TAX$ is symmetric.  \item \label{Bott7} $X^TAM+M^TAX = \Diag(U_1,\dots,U_t)$ with each $U_j$ an $n_j \times n_j$ symmetric matrix. \item \label{Bott8} $M \in \Ker H(X)$. \end{enumerate} In particular, the equivalence of \eqref{Bott1} and \eqref{Bott8} implies that $f$ is Morse-Bott. \end{thm}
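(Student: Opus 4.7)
The plan is to establish the eight equivalences in two groups linked by a Jacobi identity, and then to close the loop via a dimension count. Since $B$ is symmetric, $S := MBX^T + XBM^T$ is symmetric, so applying Lemma~\ref{lem:1} to the pair $(A, S)$ immediately yields (ii) $\iff$ (iii) $\iff$ (iv). Analogously, $V := X^TAM + M^TAX$ is symmetric, so applying Lemma~\ref{lem:1} with $B$ playing the role of the diagonal matrix (the lemma is symmetric in this role) to $V$ yields (v) $\iff$ (vi) $\iff$ (vii).

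To bridge (iv) and (v), I would introduce $E := X^TM \in \so(n)$ and $C := X^TAX$, a symmetric matrix. Using $M^T = -EX^T$ one computes directly that $S = X[E, B]X^T$ and $V = [C, E]$, so conjugating by $X$ turns (iv) into $[C, [E, B]] = 0$, while (v) reads $[B, [C, E]] = 0$. Since $X \in F$, Lemma~\ref{lem:nabla}(iii) gives $[A, XBX^T] = 0$, and conjugating by $X$ transforms this into $[C, B] = 0$. The Jacobi identity $[C, [E, B]] + [E, [B, C]] + [B, [C, E]] = 0$ then collapses to $[C, [E, B]] + [B, [C, E]] = 0$, giving (iv) $\iff$ (v).

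For (vii) $\iff$ (viii), I would rewrite the Hessian. By Lemma~\ref{lem:Hessian} and cyclicity of the trace, $H(E, N) = \Tr(AX[E, [N, B]]X^T) = \Tr(C[E, [N, B]]) = \Tr([C, E][N, B])$. Both $[C, E]$ and $[N, B]$ are symmetric, and the explicit formula $[N, B]_{ij} = (B_{jj} - B_{ii})N_{ij}$ shows that the image of $N \mapsto [N, B]$ is the subspace of symmetric matrices whose entries vanish in the $B$-block-diagonal positions. Its trace-orthogonal complement inside the symmetric matrices is the space of $B$-block-diagonal symmetric matrices, so $E \in \ker H(X)$ is equivalent to $V = [C, E]$ being $B$-block-diagonal with symmetric blocks, which is (vii).

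Finally, for (i) $\Rightarrow$ (iii), take any smooth curve $X(t) \in F$ through $X$ with $X'(0) = M$; by Lemma~\ref{lem:nabla}(iv) each $X(t) B X(t)^T$ is $A$-block-diagonal with symmetric blocks, and differentiating at $t = 0$ gives (iii). Combined with the other equivalences this yields $T_X F \subseteq \ker H(X)$. To close the cycle I would argue by dimension. From Theorem~\ref{thm:main} and Remark~\ref{rem:Oepsilonaction}, $\dim T_X F = \sum_i \binom{m_i}{2} + \sum_j \binom{n_j}{2} - \sum_{i,j} \binom{\epsilon_{ij}}{2}$. On the other hand, using $[C, B] = 0$ to decompose $C = \Diag(C_1, \ldots, C_t)$ with each $C_j$ symmetric of size $n_j$, and the analog of Proposition~\ref{prop:X}(iii) obtained by swapping the roles of $A, B$ and $X, X^T$ (so the $a_i$-eigenspace of $C_j$ has dimension $\epsilon_{ij}$), condition (vii) translates to: each diagonal block $E_{jj}$ is free in $\so(n_j)$ and each off-diagonal block $E_{jl}$ (for $j < l$) must intertwine $C_j$ with $C_l$. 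A standard intertwiner count then gives $\dim \ker H(X) = \sum_j \binom{n_j}{2} + \sum_{j<l}\sum_i \epsilon_{ij}\epsilon_{il}$, and a direct expansion of $\binom{m_i}{2} = \binom{\sum_j \epsilon_{ij}}{2}$ confirms the two totals agree. The chief obstacles are the Jacobi bridge of step two, which crucially uses the critical-point hypothesis through $[C, B] = 0$, and the closing dimension count, which relies on the roles-swapped version of Proposition~\ref{prop:X}(iii) to identify the spectral structure of $C$.
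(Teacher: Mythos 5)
Your proposal is correct, and it diverges from the paper's argument in two genuine ways. For the bridge between (\ref{Bott4}) and (\ref{Bott5}), the paper proceeds by multiplying the equation in (\ref{Bott5}) on the left by $X$ and on the right by $X^T$, then simplifying via $AXBX^T = XBX^TA$ and $X^TM=-M^TX$ to arrive at (\ref{Bott4}) through a chain of algebraic rewritings; your substitution $E=X^TM$, $C=X^TAX$, recasting the two conditions as $[C,[E,B]]=0$ and $[B,[C,E]]=0$ and then invoking the Jacobi identity together with $[C,B]=0$, is more structural and makes transparent exactly where the critical-point hypothesis enters. For closing the cycle, the paper simply asserts that $T_XF$ equals the $\RR[\epsilon]/\epsilon^2$-linearization of the defining equation $AXBX^T=XBX^TA$; the forward inclusion $T_XF\subseteq\{M:\text{(\ref{Bott2})}\}$ is immediate, but the reverse inclusion — which is the substance of the Morse-Bott claim — requires knowing the linearization does not exceed $T_XF$, a point the paper leaves implicit. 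Your dimension count — comparing $\dim T_XF$ from Theorem~\ref{thm:main} with the intertwiner count for $\ker H(X)$, using the roles-swapped Proposition~\ref{prop:X}(\ref{X3}) to identify the eigenstructure of the $C_j$ — supplies this verification explicitly (the identity $\binom{\sum_j\epsilon_{ij}}{2}=\sum_j\binom{\epsilon_{ij}}{2}+\sum_{j<l}\epsilon_{ij}\epsilon_{il}$ does make the totals agree), at the cost of an extra computation. Your trace-orthogonality argument for (\ref{Bott7})$\iff$(\ref{Bott8}) is essentially the paper's expansion in the standard basis $\EE(p,q)$ repackaged: both reduce to the observation that $[N,B]_{pq}=(B_{qq}-B_{pp})N_{pq}$, so $\ker H(X)$ is characterized by $[C,E]$ being supported on the $B$-block diagonal.
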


\begin{proof}  According to Lemma~\ref{lem:nabla}, \be F &=& \{ X \in O(n) : AXBX^T = XBX^TA \}. \ee  Therefore, a tangent vector $M \in T_X O(n)$ will be in $T_X F$ iff $A(X+\epsilon M)B(X^T+\epsilon M^T)$ (viewed as a matrix with entries in $\RR[\epsilon]/\epsilon^2$) is symmetric.  Since $AXBX^T$ is symmetric (because $X \in F$), this is equivalent to \eqref{Bott2}.  This proves the equivalence of \eqref{Bott1} and \eqref{Bott2}.  

The conditions \eqref{Bott2}, \eqref{Bott3}, and \eqref{Bott4} are equivalent by Lemma~\ref{lem:1}, applied to the symmetric matrix $S=MBX^T+XBM^T$.  Similarly, \eqref{Bott5}, \eqref{Bott6}, and \eqref{Bott7} are equivalent by Lemma~\ref{lem:1}, applied to the symmetric matrix $X^TAM+M^TAX$ (and with $A$ there given by $B$ here).

Using the description of $H(X)$ in \eqref{Ddfver}, we see that \eqref{Bott8} is equivalent to \be \Tr(AM[ \EE(p,q),B]X^T+AX[ \EE(p,q),B]M^T) &=& 0 \ee for $1 \leq p < q \leq n$.  For any $n \times n$ matrices $C$ and $D$, we compute \be \Tr(C \FF(p,q) D) &=& \sum_{r=1}^n (C \FF(p,q) D)_{rr} \\ \nonumber &=& \sum_{r=1}^n \sum_{u,v=1}^n C_{ru} \FF(p,q)_{uv} D_{vr} \\ \nonumber &=& \sum_{r=1}^n C_{rp}D_{qr}+C_{rq}D_{pr} \\ \nonumber &=& (DC)_{qp}+(DC)_{pq}. \ee  Using  this and \eqref{EijBcommutator} we compute \be & & \Tr(AM[ \EE(p,q),B]X^T+AX[ \EE(p,q),B]M^T) \\ &=& (B_{pp}-B_{qq}) ( (X^TAM)_{pq}+(X^TAM)_{qp}+(M^TAX)_{pq}+(M^TAX)_{qp} ) \\ &=& 2(B_{pp}-B_{qq})(X^TAM+M^TAX)_{pq}. \ee  This is zero for $1 \leq p < q \leq n$ iff $X^TAM+M^TAX$ has the block form in \eqref{Bott7} (the blocks must be symmetric since $X^TAM+M^TAX$ is symmetric).  This proves the equivalence of \eqref{Bott8} and \eqref{Bott7}.

It remains only to prove the equivalence of \eqref{Bott4} and \eqref{Bott5}.  The equation in \eqref{Bott5} is equivalent to the equation \bne{Bott5A} XBX^TAMX^T+XBM^TA &=& AMBX^T + XM^TAXBX^T \ene obtained by multiplying on the left by $X$ and on the right by $X^T$.  Since $X \in F$, we have $AXBX^T=XBX^TA$ (Lemma~\ref{lem:nabla}), so we can rewrite \eqref{Bott5A} as \bne{Bott5B} AXBX^TMX^T+XBM^TA &=& AMBX^T+XM^TXBX^TA. \ene  Since $M \in T_X O(n)$ we have $X^TM=-M^TX$, so we can rewrite \eqref{Bott5B} as \be -AXBM^T+XBM^TA &=& AMBX^T-MBX^TA \ee or, equivalently, as \be XBM^TA + MBX^TA &=& AXBM^T+AMBX^T. \ee This equation  is equivalent to its transpose, which is nothing but the equation in \eqref{Bott4}.  The proof is complete. \end{proof}

Finally, in the remainder of this section, we will use the computations \eqref{EijBcommutator}-\eqref{lastformula} to determine the index of the Hessian of $f$.  In order to  express our results more simply we will assume (again, with no significant loss in generality) that the diagonal entries of $A$ and $B$ are in non-decreasing order---i.e.\ $a_1<\cdots<a_s$ and $b_1<\cdots<b_t$.

\begin{defn} \label{defn:signedpermutationmatrix} A \emph{sign matrix} is a diagonal matrix with diagonal entries in $\{ \pm 1\}$.  A \emph{signed permutation matrix (SPM)} is a matrix of the form $SP_{\sigma}$ with $S$ a sign matrix and $\sigma \in \SG_n$ a permutation. \end{defn}

\begin{lem} \label{lem:index} Every connected component of $F$ contains at least one signed permutation matrix. \end{lem}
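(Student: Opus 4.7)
By Theorem~\ref{thm:main}, any connected component of $F$ is the image under some $\Phi_\epsilon$ of a connected component of $(O(\ov m) \times O(\ov n))/O(\epsilon)$, which in turn is the image of some connected component of $O(\ov m) \times O(\ov n)$ under the quotient map (surjective with connected fibers). So to produce an SPM in a given connected component of $F$, it suffices, given $X \in F$ with $X = \Phi_\epsilon(Q,R)$ (Theorem~\ref{thm:main}), to find a path from $(Q,R)$ to some $(Q',R') \in O(\ov m) \times O(\ov n)$ such that $\Phi_\epsilon(Q',R')$ is an SPM: composing with the continuous map $\Phi_\epsilon$ then yields a path in $F$ from $X$ to an SPM lying in the same connected component of $F$ as $X$.

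The path is easy to construct. Each group $O(m_i)$ has two connected components, each containing at least one SPM (for instance, the identity $I_{m_i}$ and the reflection $\Diag(-1,1,\dots,1)$). Consequently, for each $i$ we may choose a path in $O(m_i)$ from $Q[i]$ to an SPM $Q'[i] \in O(m_i)$; concatenating gives a path in $O(\ov m)$ from $Q$ to some $Q'$ with every $Q'[i]$ an SPM. The analogous construction produces $R'$ with every $R'[j]$ an SPM.

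It remains to verify that $X' := \Phi_\epsilon(Q',R')$ is itself an SPM. Consider a single block $X'[i,j] = Q'[i,j] R'[i,j]^T$. Since $Q'[i]$ is an SPM, each of its columns has exactly one nonzero entry, namely a $\pm 1$; moreover, distinct columns have their nonzero entries in distinct rows. These properties are inherited by the submatrix $Q'[i,j]$ formed from $\epsilon_{ij}$ consecutive columns of $Q'[i]$, and analogously for $R'[i,j]$. A direct entrywise calculation then shows that each entry of $X'[i,j] = Q'[i,j] R'[i,j]^T$ lies in $\{0, \pm 1\}$, and that each row and each column of $X'[i,j]$ contains at most one nonzero entry. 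Hence the entire matrix $X'$ has all entries in $\{0,\pm 1\}$. But $X' \in O(n)$ by Proposition~\ref{prop:X}(\ref{X1}), so each column of $X'$ has Euclidean norm $1$, which together with the entry constraint forces each column of $X'$ to contain exactly one $\pm 1$ and no other nonzero entries. Therefore $X'$ is an SPM.

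The only mildly technical point is the last verification that a $\Phi_\epsilon$-product of blockwise SPMs is an SPM, but this reduces to a transparent entry count combined with the orthogonality of $X'$ provided by Proposition~\ref{prop:X}; no serious obstacle arises.
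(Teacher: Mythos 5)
Your proof is correct and takes essentially the same approach as the paper: reduce via Theorem~\ref{thm:main} to showing that $\Phi_\epsilon(Q,R)$ is an SPM whenever each $Q[i]$ and $R[j]$ is an SPM, then verify this using orthogonality of $X'=\Phi_\epsilon(Q',R')$. The paper argues the verification by contradiction (showing no row of $X'$ has two nonzero entries), while you first show all entries lie in $\{0,\pm1\}$ and then invoke the column-norm constraint; these are minor variants of the same idea. One small inaccuracy: the parenthetical claim that the quotient map $O(\ov m)\times O(\ov n)\to (O(\ov m)\times O(\ov n))/O(\epsilon)$ has connected fibers is false when $\epsilon\neq 0$, since the fibers are free $O(\epsilon)$-orbits and $O(\epsilon)=\prod_{i,j}O(\epsilon_{ij})$ is disconnected. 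This does not affect your argument — the path you construct from $(Q,R)$ to $(Q',R')$ already places $\Phi_\epsilon(Q',R')$ in the same component of $F$ as $X$ — but the remark should be corrected or dropped.
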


\begin{proof} Each connected component of any orthogonal group certainly contains a SPM, so by Theorem~\ref{thm:main} it suffices to show that if $Q,R,\epsilon$ are as in Construction~\ref{const:main}, and the $Q[i]$ and $R[j]$ are SPMs, then $X = \Phi_{\epsilon}(Q,R)$ is a SPM.  Since $X \in O(n)$ it suffices to show that no row of $X$ contains more than one non-zero entry.  To see this, suppose $X_{p,q},X_{p,r} \neq 0$ for some $p,q,r$.  According to the construction of $X = \Phi_{\epsilon}(Q,R)$, we would then have \be & X_{p,q} = X[i,j]_{\ov{p},\ov{q}} = \sum_s Q[i,j]_{\ov{p},s} R[i,j]_{\ov{q},s} \\ & X_{p,r} = X[i,l]_{\ov{p},\ov{r}} = \sum_t Q[i,l]_{\ov{p},t} R[i,l]_{\ov{r},t} \ee for $i,j,l,\ov{p},\ov{q},\ov{r}$ determined from $p,q,r$ in the obvious manner.  Since $Q[i]$ is a SPM there is a unique $\ov{j}$ such that row $\ov{p}$ of $Q[i,\ov{j}]$ is not identically zero, and, furthermore, there is a unique $\ov{s}$ so that $Q[i,\ov{j}]_{\ov{p},\ov{s}} \neq 0$.  Since $X_{p,q},X_{p,r} \neq 0$, the former fact implies that $j=l = \ov{j}$, and then the latter fact (together with the fact that $R[i,\ov{j}]_{u,\ov{s}} \neq 0$ for at most one $u$ because $R[\ov{j}]$ is a SPM) implies that $\ov{q}=\ov{r}$.  Since $j=l$ and $\ov{q}=\ov{r}$ we have $q=r$. \end{proof}

\begin{thm} \label{thm:index} Let $X = SP_{\sigma}$ be a signed permutation matrix.  Then: \begin{enumerate} \item \label{index1} $X$ is a critical point of $f$.  \item \label{index2} If we let $\epsilon_{ij}$ denote the number of non-zero entries in the $m_i \times n_j$ block $X[i,j]$ of $X$ then $\epsilon=(\epsilon_{ij})$ is a perfect filling and there are $Q,R$ as in Construction~\ref{const:main} such that $X=\Phi_{\epsilon}(Q,R)$.  (One can even arrange that the $Q[i]$ are SPMs and the $R[j]$ are permutation matrices.)  \item \label{index3} The Hessian $H = H(f,X)$ of $f$ at $X$ (viewed as a quadratic form on $\so(n)$ as in Lemma~\ref{lem:Hessian}) is diagonal in the standard basis for $\so(n)$ and the numbers $H(p,q) := H( \EE(p,q),\EE(p,q) )$ (for $1 \leq p < q \leq n$) are given by \be H(p,q) & = & 2(B_{pp}-B_{qq})(A_{\sigma(q) \sigma(q)}-A_{\sigma(p) \sigma(p)}). \ee \item \label{index4}  The index of $H = H(f,X)$ is equal to the number of $(p,q) \in \{ 1, \dots,n \}^2$ with $B_{pp} < B_{qq}$ and $A_{\sigma(q) \sigma(q)} >  A_{\sigma(p) \sigma(p)}$.  In terms of the perfect filling $\epsilon$ associated to $X$ in \eqref{index2}, this index can be written $$ \sum_{(i,j) < (k,l)} \epsilon_{ij} \epsilon_{kl},$$ where $(i,j) < (k,l)$ means $i<k$ and $j < l$. \end{enumerate} \end{thm}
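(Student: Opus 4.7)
Parts (1) and (2) should be a direct unpacking of the SPM structure of $X$. For (1), when $X = SP_\sigma$, the product $XBX^T = SP_\sigma B P_\sigma^T S$ is diagonal (the $\pm 1$ signs from $S$ cancel), hence trivially of the block-diagonal form required by Lemma~\ref{lem:nabla}, so $X$ is critical. For (2), an SPM has exactly one nonzero entry in each row and each column, so summing the block-nonzero counts $\epsilon_{ij}$ over $j$ tallies the nonzero entries in the $m_i$ rows of the $i$th $A$-block (giving \eqref{3}), and analogously over $i$ gives \eqref{2}. To factor $X[i,j] = Q[i,j]R[i,j]^T$, I would let the columns of $Q[i,j]$ be the signed standard basis vectors of $\RR^{m_i}$ picking out the nonzero row positions of $X[i,j]$ (with the same signs as those nonzero entries), and let the columns of $R[i,j]$ be the matching standard basis vectors of $\RR^{n_j}$; assembling these yields an SPM $Q[i] \in O(m_i)$ and a permutation matrix $R[j] \in O(n_j)$ with $X = \Phi_\epsilon(Q,R)$.

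For (3), the plan is a bilinear computation on the standard basis using Lemma~\ref{lem:Hessian} together with the commutator formulas \eqref{EijBcommutator}--\eqref{lastformula}. For $\EE(p,q)$ and $\EE(u,v)$, I would first apply \eqref{EijBcommutator} to replace $[\EE(u,v),B]$ by $(B_{uu}-B_{vv})\FF(u,v)$, then evaluate $[\EE(p,q), \FF(u,v)]$ via \eqref{EijFklcommutator}. In the off-diagonal cases, i.e., $|\{p,q\} \cap \{u,v\}| \leq 1$, the commutator is either zero or $\pm \FF(a,b)$ with $a \neq b$; computing $\Tr(AX\FF(a,b)X^T)$ with $X = SP_\sigma$ using \eqref{lastformula} and the identity $S\FF(u,v)S = s_u s_v \FF(u,v)$ produces a multiple of an off-diagonal entry of $A$, which vanishes since $A$ is diagonal. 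Hence $H$ is diagonal in the standard basis. When $\{p,q\} = \{u,v\}$, \eqref{EijFklcommutator} gives $2\DD(q) - 2\DD(p)$, and \eqref{Psigmaconj} combined with $S\DD(p)S = \DD(p)$ yields $\Tr(AX\DD(p)X^T) = A_{\sigma(p)\sigma(p)}$, from which the formula for $H(p,q)$ drops out.

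Part (4) is then a counting exercise. The index of the diagonal form $H$ equals the number of pairs $1 \leq p < q \leq n$ with $H(p,q) < 0$; since the $B_{kk}$ are non-decreasing, $B_{pp} - B_{qq} \leq 0$ for $p<q$ with strict inequality iff $p,q$ lie in distinct $B$-blocks, so negativity of $H(p,q)$ reduces to the stated condition (the ordered-pair count on $\{1,\dots,n\}^2$ is unambiguous because $B_{pp} < B_{qq}$ automatically forces $p < q$). To pass to the filling formulation, I would group contributing pairs by the $A$- and $B$-block indices $(i,j),(k,l)$ to which $(\sigma(p),p)$ and $(\sigma(q),q)$ belong: by the construction of $\epsilon$ in (2), the number of $p$ with $p$ in the $j$th $B$-block and $\sigma(p)$ in the $i$th $A$-block is precisely $\epsilon_{ij}$, so the count factors as $\sum_{(i,j) < (k,l)} \epsilon_{ij}\epsilon_{kl}$. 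The main obstacle is the case analysis in (3) verifying that the off-diagonal entries of $H$ all cancel; once those cancellations are confirmed, (1), (2), and (4) are essentially bookkeeping.
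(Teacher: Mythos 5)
Your proposal is correct and follows essentially the same path as the paper: compute $XBX^T$ directly for (1), factor each nonzero block as $Q[i,j]R[i,j]^T$ with $Q[i]$ an SPM and $R[j]$ a permutation matrix for (2), use the commutator formulae together with the vanishing of diagonal entries of $\FF(a,b)$ (and hence of $\Tr(A X \FF(a,b) X^T)$ for diagonal $A$) for (3), and count negative $H(p,q)$ grouped by blocks for (4). The only cosmetic difference is that the paper builds $Q[i,j]$ by deleting the zero columns of $X[i,j]$ and builds $R[i,j]^T$ by zero-padding an identity, whereas you describe the same matrices directly in terms of signed/unsigned standard basis vectors.
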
  

\begin{proof} \eqref{index1} follows from \eqref{index2} in light of Proposition~\ref{prop:X}, though it can be seen more directly as follows:  Using \eqref{Psigmaconj} we compute $$X B X^T = SP_{\sigma} B P_\sigma^T S = \sum_{p=1}^n B_{pp} \DD(\sigma(p)).$$  Since this matrix is diagonal, $X \in F$ by Lemma~\ref{lem:nabla}.  

For \eqref{index2}, it is obvious from the construction of the $\epsilon_{ij}$ that $\epsilon$ is a perfect filling.  (For example, $\sum_j \epsilon_{ij}=m_i$ for any fixed $i$ because $\sum_j \epsilon_{ij}$ is just the total number of non-zero entries in some $m_i$ rows of $X$; this is $m_i$ because $X$ is a SPM.)  Fix some $i$ and $j$.  Let $z_1,\dots,z_r \in \{ 1, \dots, n_j \}$ ($r := n_j - \epsilon_{ij}$) be the identically zero columns of the $m_i \times n_j$ matrix $X[i,j]$.  Let $Q[i,j]$ be the $m_i \times \epsilon_{ij}$ matrix obtained from $X[i,j]$ by deleting these columns.  Let $R[i,j]^T$ be the $\epsilon_{ij} \times n_j$ matrix such that columns $z_1,\dots,z_r$ of $R[i,j]^T$ are identically zero and such that the $\epsilon_{ij} \times \epsilon_{ij}$ matrix obtained by deleting these zero columns from $R[i,j]^T$ is the identity matrix $I_{\epsilon_{ij}}$.  Then we clearly have \be X[i,j] &=& Q[i,j]R[i,j]^T. \ee For any fixed $i$, the $m_i \times m_i$ matrix $$Q[i] = \bp Q[i,1] & \cdots & Q[i,t] \ep $$ is obtained from the SPM $X$ by taking the $m_i$ rows $$ \bp X[i,1] & \cdots & X[i,t] \ep $$ of $X$ and then deleting the identically zero columns in the resulting matrix.  It is evident from this description of $Q[i]$ that $Q[i]$ is itself a SPM.  One can see similarly that, for any fixed $j$, the $n_j \times n_j$ matrix $$R[j] = \bp R[1,j] & \cdots & R[s,j] \ep $$ is a permutation matrix.

The description of the Hessian in \eqref{index3} follows from \eqref{Hessianformula} by using \eqref{EijBcommutator}-\eqref{lastformula}.  (The basic point here is that the diagonal entries of the matrices $\FF(p,q)$ are zero, hence the diagonal entries of any product of $\FF(p,q)$ and a diagonal matrix will also be zero.)

The first formula for the index in \eqref{index4} is immediate from \eqref{index3}.  (Note that $B_{pp} < B_{qq}$ can occur only if $p<q$ because we are assuming $b_1<\cdots<b_t$.)  To see that this is equal to the other formula for the index, first notice that the sum in \eqref{index4} counts the number of pairs $((p',p),(q',q))$ such that the $(p',p)$ and $(q',q)$ entries of $X$ are non-zero and the $(p',p)$ entry lies in a block $X[i,j]$ further up and to the left in $X$ than the block $X[k,l]$ containing $(q',q)$.  Since $X=S P_{\sigma}$ is a signed permutation matrix we must have $p' = \sigma(p)$ and $q' = \sigma(q)$ for any such pair.  Furthermore, the condition that the block $X[k,l]$ containing $(\sigma(q),q)$ is further down and further right than the block $X[i,j]$ containing $(\sigma(p),p)$ is equivalent to the conditions on $(p,q)$ in the first formula for the index.  \end{proof} 

Let $F_\epsilon \subseteq F$ be the image of the map $\Phi_{\epsilon}$, so that $F$ is the disjoint union of the $F_\epsilon$ by Theorem~\ref{thm:main}.  Then:

\begin{cor} \label{cor:index} The index of $H(f)$ on $F_{\epsilon}$ is constant, given by the sum in Theorem~\ref{thm:index}\eqref{index4}. \end{cor}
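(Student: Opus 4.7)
The plan is to deduce the corollary from Theorem~\ref{thm:index}(\ref{index4}) by locating a signed permutation matrix in each connected component of $F_\epsilon$ and then invoking the Morse--Bott property established in Theorem~\ref{thm:bott}.

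First, I would check that each $F_\epsilon$ is a union of connected components of $F$. By Theorem~\ref{thm:main} the $F_\epsilon$ form a finite disjoint partition of $F$, and each $F_\epsilon$ is compact (being the diffeomorphic image of the compact manifold $(O(\ov{m}) \times O(\ov{n}))/O(\epsilon)$) and therefore closed in $F$; a finite disjoint partition of $F$ into closed sets is automatically also a partition into open sets. Consequently the connected component in $F$ of any point of $F_\epsilon$ lies entirely inside $F_\epsilon$.

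Next, pick any connected component $C \subseteq F_\epsilon$. By Lemma~\ref{lem:index}, $C$ contains a signed permutation matrix $X = SP_\sigma$. Applying Theorem~\ref{thm:index}(\ref{index2}) to this $X$ produces a perfect filling $\epsilon'$---obtained by counting non-zero entries in the blocks $X[i,j]$---with $X = \Phi_{\epsilon'}(Q,R)$ for suitable $Q,R$; the injectivity of $\Phi$ in Theorem~\ref{thm:main} forces $\epsilon' = \epsilon$. Theorem~\ref{thm:index}(\ref{index4}) then computes the Hessian index at $X$ as $\sum_{(i,j)<(k,l)} \epsilon_{ij}\epsilon_{kl}$, a quantity depending only on $\epsilon$. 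Since $f$ is Morse--Bott (Theorem~\ref{thm:bott}), this index is locally constant on $F$, hence constant on $C$; as $C$ was an arbitrary component of $F_\epsilon$, the same value is attained throughout $F_\epsilon$.

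The only conceivable obstacle is the possibility that $F_\epsilon$ might be disconnected in a way that harbors SPMs giving differing indices; but this is precisely what the argument above rules out, since any SPM in $F_\epsilon$ must, by the disjointness of the $F_\epsilon$ in Theorem~\ref{thm:main}, have $\epsilon$ as its associated perfect filling, so Theorem~\ref{thm:index}(\ref{index4}) returns the same value in every component. I expect no further complications.
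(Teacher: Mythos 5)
Your proof is correct and follows the paper's own approach: both arguments combine local constancy of the Hessian index with Lemma~\ref{lem:index} to place a signed permutation matrix in each component of $F_\epsilon$ and then apply Theorem~\ref{thm:index}(\ref{index4}). You simply spell out two steps the paper leaves implicit --- that the $F_\epsilon$ are unions of components of $F$ (by compactness) and that the perfect filling associated to the SPM found must coincide with $\epsilon$ (by injectivity of $\Phi$) --- which is reasonable extra care but not a different route.
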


\begin{proof} The index of the Hessian is always locally constant, so it suffices to show that the index is given by the claimed formula on each component of $F_{\epsilon}$.  By Lemma~\ref{lem:index} each component of $F_{\epsilon}$ contains a signed permutation matrix and by Theorem~\ref{thm:index} the index at any such permutation matrix is as claimed. \end{proof}

\begin{example} \label{example:distincteigenvalues2}  Suppose $A=\Diag(a_1,\dots,a_n)$ (for distinct $a_p$) and $B = \Diag(b_1,\dots,b_n)$ (for distinct $b_p$).  (We continue to assume that the $a_p$ and the $b_p$ are in increasing order.)  In this case, Theorems~\ref{thm:main} and \ref{thm:bott} show that $f$ is a Morse function whose critical points are \emph{precisely} the signed permutation matrices $SP_{\sigma}$.  Fix a signed permutation matrix $X=SP_{\sigma}$.  In this case $B_{pp}<B_{qq}$ (resp. $A_{\sigma(q)\sigma(q)} > A_{\sigma(p) \sigma(p)}$) iff $p<q$ (resp.\ $\sigma(q) > \sigma(p)$), so by Theorem~\ref{thm:index} the index of $f$ at $X$ is given by $$| \{ (p,q): 1 \leq p < q \leq n {\rm \; and \;} \sigma(q) > \sigma(p) \} |$$ 
which is nothing but the inversion number of $\sigma$.
\end{example}

\begin{example} Another extreme example occurs when $A=aI_n$ and $B=bI_n$.  In this case $f$ is constant, so its index at any critical point is zero.  This is consistent with Theorem~\ref{thm:index}\eqref{index4} because there are no such pairs $(p,q)$, nor are there any $(i,j),(k,l)$ with $(i,j)<(k,l)$.  (Indeed, there is only one pair $(i,j)$, namely $(1,1)$.) \end{example}

\section{Linear Morse-Bott functions on $O(n)$} \label{section:SOn1}

The manifold $O(n)$ is a submanifold of the vector space $\MM$ of $n \times n$ matrices.  We call a function $f : O(n) \to \RR$ \emph{linear} if $f$ is the restriction of a linear function $\MM \to \RR$.  Since $\langle A,X \rangle := \Tr(A^TX)$ is a non-degenerate inner product on $\MM$, any linear function $f : \MM \to \RR$ is of the form $f(X):=\Tr(A^T X)$ for a unique $A \in \MM$.  Being linear, the function $f=f_A$ (whether regarded as a function on $\MM$ or on $O(n)$) is ``its own derivative" in the sense that \bne{df} (df)(X)(M) &=& \Tr(A^TM) \ene for all $X \in \MM$ (resp.\ $X \in O(n)$) and all $M \in T_X \MM = \MM$ (resp.\ $M \in T_X O(n)$).

We view the (positive definite) inner product on $\MM$ defined above as a ``constant" Riemannian metric on $\MM$.  Its restriction to $O(n)$ is (up to a positive constant) the Killing metric on $O(n)$.  The gradient $\nabla f$ of $f=f_A : \MM \to \RR$ with respect to this metric is characterized by the equality  \bne{gradchar} \langle (\nabla f)(X),M \rangle &=& (df)(X)(M) \ene for all $X \in \MM$, $M \in T_X \MM = \MM$.  Comparing \eqref{gradchar} and \eqref{df}, we find $(\nabla f)(X)=A$ for every $X \in \MM$.  Since the metric on $O(n) \subseteq \MM$ is the restriction of the metric on $\MM$, the gradient of $f|O(n)$ at $X \in O(n)$ is given by orthogonally projecting $(\nabla f)(X)=A \in T_X \MM$ onto the subspace $T_X O(n) \subseteq \MM$.  We thus find that \bne{gradfSOn} (\nabla f)(X) &=& \frac{1}{2}(A-XA^TX) \in T_X O(n) \ene for $X \in O(n)$, when $f$ is regarded as a function $f : O(n) \to \RR$.  Formula \eqref{gradfSOn} can also be obtained by directly verifying that, \emph{if} $(\nabla f)(X)$ \emph{were defined} by \eqref{gradfSOn}, then the equality \eqref{gradchar} characterizing the gradient would hold for all $X \in O(n)$, $M \in T_X O(n)$.  Since the critical points of $f$ are precisely the points of $X \in O(n)$ where $(\nabla f)(X)=0$, we obtain the following from \eqref{gradfSOn}:

\begin{lem} \label{lem:lineargradient} $X \in O(n)$ is a critical point of $f_A$ iff $XA^T$ is symmetric (equivalently $A^TX$ is symmetric). \end{lem}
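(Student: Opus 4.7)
The plan is to read the lemma directly off the gradient formula (\ref{gradfSOn}) established just above the statement. By definition, $X$ is a critical point of $f_A$ iff $(\nabla f)(X) = 0$, so (\ref{gradfSOn}) reduces the problem to analyzing the matrix equation $A = X A^T X$. Using $X^T X = X X^T = I$, I would multiply this equation on the right by $X^T$ to obtain $A X^T = X A^T$, which is precisely the assertion that $X A^T$ equals its own transpose, i.e.\ is symmetric. The implication runs both ways because multiplication by the invertible matrix $X^T$ is reversible.

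For the parenthetical equivalence, I would observe that conjugation by an orthogonal matrix preserves the space of symmetric matrices: starting from $X A^T$ symmetric, the conjugate $X^T (X A^T) X = A^T X$ is again symmetric, and the inverse conjugation recovers $X A^T$.

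As an alternative that does not invoke (\ref{gradfSOn}), one may proceed directly from (\ref{df}). Parametrizing $T_X O(n)$ by $\so(n)$ via $E \mapsto X E$, the criticality condition $(df)(X)(M) = 0$ for all $M \in T_X O(n)$ becomes $\Tr(A^T X E) = 0$ for every skew-symmetric $E$. Since the orthogonal complement of $\so(n) \subseteq \MM$ under the trace pairing is exactly the space of symmetric matrices, this says precisely that $A^T X$ is symmetric.

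No substantive obstacle is anticipated; the lemma is essentially a one-line corollary of either the gradient formula or the tangent-space description of $O(n)$, combined with orthogonality of $X$.
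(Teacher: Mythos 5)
Your proposal is correct and matches the paper's approach: the paper derives the lemma directly from the gradient formula \eqref{gradfSOn}, exactly as in your first paragraph (the equation $A = XA^TX$ rearranged via orthogonality of $X$ to $AX^T = XA^T$). Your alternative derivation from \eqref{df} via the trace pairing is also valid, but the paper does not need it since it has already computed the gradient.
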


The linear functions $f_A : O(n) \to \RR$ were studied in \cite{SS}, where it is shown that $f_A$ is a Morse function on $O(n)$ iff the symmetric matrix $AA^T$ has $n$ distinct eigenvalues.  

In the special case where $A = I$, the function $g := f_A$ is given by $g(X) = \Tr(X)$.  The function $g$ (or, more precisely, its restriction to $SO(n)$) was studied by Frankel in \cite{F}.  From Lemma~\ref{lem:lineargradient} we see that the critical locus $F$ of $g$ is given by $$F = \{ X \in O(n) : X = X^T \} = \{ X \in O(n) : X^2 = I \}.$$  Frankel's study of $g$ in \cite{F} makes heavy use of the observation that $g$ is a class function (i.e.\ is conjugation invariant).  This is an excellent technique in that it allows him to bring to bear the methods of Lie theory:  The study of $g$ is thus ultimately reduced to the study of the restriction of $g$ to ``the" maximal torus $T$.\footnote{To do this one must always have some understanding of how the normalizer of ``the" maximal torus $T$ acts on $T$ by conjugation.  In \cite{F} this amounts to knowing the centralizers in $SO(n)$ of certain elements of the usual maximal torus $T \subseteq SO(n)$ and to knowing when certain elements of $T$ are conjugate to each other in $SO(n)$.}  This technique also allows Frankel to treat the other ``classical groups" $U(n), Sp(n)$ and their ``trace functions" (or rather, their real parts) on the same footing and also lends itself to Frankel's study of Stiefel manifolds in the second half of \cite{F}.

Despite these advantages of Frankel's technique, we would like to point out that many of the proofs in \cite{F} (at least in the case of $SO(n)$) can be greatly simplified by using more direct arguments.  For example, the description of $F$ obtained above is about as simple as one could imagine.  In \cite{F}, however, this description of $F$ is obtained as follows: \begin{enumerate} \item \label{F1} Frankel first shows \cite[Lemma~1]{F} that $(\nabla g)(X) \in T_X T \subseteq T_X SO(n)$ for all $X \in T \subseteq SO(n)$.  (This alone takes a page in \cite{F}, though it can be seen almost immediately from \eqref{gradfSOn}.)  \item \label{F2} From \eqref{F1} it follows that $X \in T$ is in $F$ iff $X$ is a critical point of $g|T$.  By making use of the explicit description of the ``usual maximal torus" $T$ Frankel now directly verifies that $X \in T$ is in $F$ iff $X^2=I$.  (This of course also takes a while since he divides into cases depending on the parity of $n$.)  \item \label{F3} Finally, by making use of the fact that any $X \in SO(n)$ is conjugate in $SO(n)$ to some $X' \in T$, the fact that $g$ is conjugation invariant (hence the condition ``$X \in F$" is conjugation invariant), and the fact that the condition $X^2=I$ is conjugation invariant, one arrives at the description of $F$. \end{enumerate}

In \cite{F} Frankel next establishes a diffeomorphism \bne{Fdiffeo} \coprod_k \GG(2k,n) & \to & F \subseteq SO(n) \\ \nonumber \Lambda & \mapsto & X(\Lambda), \ene where $\GG(2k,n)$ is the Grassmannian of $2k$-dimensional linear subspaces $\Lambda \subseteq \RR^n$ and $F$ now denotes the critical locus of $g : SO(n) \to \RR$.  The description of this diffeomorphism in \cite{F} is again rather circuitous.  It can be obtained easily as follows:  Given $\Lambda \in \GG(2k,n)$, let $X = X(\Lambda)$ be the unique linear transformation $\RR^n \to \RR^n$ such that $\Lambda$ (resp.\ $\Lambda^\perp$) is the $(-1)$-eigenspace (resp.\ $1$-eigenspace) of $X$.  Obviously $X$ is self-adjoint (so $X=X^T$), $X^2=I$, and $X \in SO(n)$ (because the dimension $2k$ of $\Lambda$ is even), so $X \in F$.  This yields a map as in \eqref{Fdiffeo} which is clearly smooth.  A smooth inverse for this map can be constructed as follows:  Fix $X \in F$.  Since $X$ is symmetric, it is (orthogonally) diagonalizable (over $\RR$, so its eigenvalues are real) and distinct eigenspaces of $X$ are orthogonal.  Furthermore $X \in O(n)$, so its eigenvalues have magnitude $1$.  Therefore, if we let $\Lambda = \Lambda(X)$ be the $(-1)$-eigenspace of $X$, then we have an orthogonal direct sum decomposition $\RR^n = \Lambda \oplus \Lambda^\perp$ with $\Lambda^\perp$ equal to the $1$-eigenspace of $X$.  Since $X \in SO(n)$, the dimension of $\Lambda$ must be even ($2k$, say).  Evidently $X \mapsto \Lambda(X)$ is the inverse of $\Lambda \mapsto X(\Lambda)$.\footnote{The diffeomorphisms \eqref{Fdiffeo} are written differently in \cite{F} because Frankel identifies a critical point with its $1$-eigenspace, rather than with its $(-1)$-eigenspace as we have done above.}  If we instead work with $O(n)$, then the same discussion (with all parity considerations removed) yields a diffeomorphism \bne{Fdiffeo2} \coprod_k \GG(k,n) & \to & F \subseteq O(n). \ene

The Hessian of $g$ (and, more generally, of any linear function $f_A : O(n) \to \RR$) is easily described.  As in \S\ref{section:BMF1}, we view the Hessian of $f_A$ as a quadratic form on $\so(n)$ via the isomorphism \eqref{TangentSpaceIso}.

\begin{lem} \label{lem:linearH}  The Hessian $H=H(f_A,X)$ of the linear function $f_A$ at a critical point $X$, viewed as a quadratic form on $\so(n)$ as above, is given by \be H(E,N) &=& \Tr(A^TXEN) \ee for $E,N \in \so(n)$. \end{lem}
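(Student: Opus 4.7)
The plan is to mimic the Hessian calculation carried out in the proof of Lemma~\ref{lem:Hessian}, which is much simpler here because $f_A$ is linear on the ambient matrix space $\MM$, so its first derivative \eqref{df} is independent of the base point. First I would trivialize $TO(n)$ via the isomorphism \eqref{TangentSpaceIso}, $E \mapsto XE$, and rewrite $df$ as a ``vertical'' map $(df)^{ver} : O(n) \to \so(n)^*$. Plugging the tangent vector $XN$ (for $N \in \so(n)$) into \eqref{df} gives
\be (df)^{ver}(X)(N) &=& \Tr(A^T X N). \ee

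Next I would differentiate $(df)^{ver}$ at $X$ with respect to a tangent vector $M \in T_X O(n)$. Since $A$ is constant and the expression $\Tr(A^T X N)$ is linear in $X$, the derivative is immediately
\be (D(df)^{ver})(X)(M)(N) &=& \Tr(A^T M N). \ee
Trivializing the source by setting $M = XE$ for $E \in \so(n)$ then yields $\Tr(A^T X E N)$, which matches the asserted formula.

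Finally I would invoke the general fact recalled in the proof of Lemma~\ref{lem:Hessian}: at a critical point $X$, the vertical component of $D(df)$ is intrinsically the Hessian $H(f_A,X)$, viewed as a self-dual map $T_X O(n) \to T_X^* O(n)$. Combined with the trivialization via \eqref{TangentSpaceIso}, this gives the displayed formula for $H(E,N)$.

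The only subtle point—really more of a sanity check than an obstacle—is the symmetry of $H$, which is forced abstractly but should be visible from $X \in F$. I would verify it directly along the lines of Remark~\ref{rem:Hessian}: using $EN - NE = [E,N]$ together with \eqref{df} gives
\be H(E,N) - H(N,E) &=& \Tr(A^T X [E,N]) \;=\; (df)(X)(X[E,N]), \ee
which vanishes precisely because $X$ is a critical point of $f_A$. No further combinatorial or geometric input is required; the lemma is purely a computation in linear algebra.
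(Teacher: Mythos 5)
Your proposal is correct and follows essentially the same route as the paper: trivialize $T O(n)$ via \eqref{TangentSpaceIso}, compute $(df)^{\mathrm{ver}}(X)(N)=\Tr(A^TXN)$, differentiate to get $\Tr(A^TMN)$, and substitute $M=XE$; the paper's proof is just a terse version of this specialization of the proof of Lemma~\ref{lem:Hessian}. Your symmetry check is the content of Remark~\ref{rem:linearH} and is a reasonable addition but not needed for the lemma itself.
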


\begin{proof}  The proof is essentially the same as the proof of Lemma~\ref{lem:Hessian}.  Equation \eqref{dfver} there becomes \be (df)^{ver} : O(n) & \to & \so(n)^* \\ \nonumber (df)^{ver}(X)(N) &=& \Tr(A^TXN) \ee here.  Equation \eqref{Ddfver} there becomes \bne{Ddfverlinear} (D(df)^{ver})(X) : T_X O(n) & \to & \so(n)^* \\ \nonumber (D(df)^{ver})(X)(M)(N) & = & \Tr( A^TMN) \ene here.  Equation \eqref{Hessian} there becomes \bne{Hessianlinear} H(X) : \so(n) & \to & \so(n)^* \\ \nonumber  H(X)(E,N) & = & \Tr(A^TXEN) \ene here. \end{proof}

\begin{rem} \label{rem:linearH} (Cf.\ Remark~\ref{rem:Hessian})  The bilinear form $H$ of Lemma~\ref{lem:linearH} is not generally symmetric when $X \notin F$.  When $X \in F$ (so $A^TX=X^TA$ by Lemma~\ref{lem:lineargradient}) one can directly verify that $H$ is symmetric by computing \be \Tr(A^TXNE) &=& \Tr(A^TXN^TE^T) \\ &=& \Tr(ENX^TA) \\ &=& \Tr(ENA^TX) \\ &=& \Tr(A^TXEN), \ee using the fact that $E,N \in \so(n)$ and standard properties of the trace. \end{rem}

We can see from Lemma~\ref{lem:linearH} that $g : O(n) \to \RR$ is Morse-Bott, as follows:  Fix any $X \in F \subseteq O(n)$, $M \in T_X O(n)$.  Since $F$ consists of the symmetric matrices in $O(n)$ we have $M \in T_X F$ iff $X+\epsilon M \in SO(n,\RR[\epsilon]/\epsilon^2)$ is symmetric, which, since $X$ is symmetric, is equivalent to saying $M$ is symmetric.  It is obvious from basic properties of the trace that $\Tr(MN)=0$ whenever $M$ is symmetric and $N$ is skew-symmetric.  On dimension grounds we therefore have an orthogonal direct sum decomposition \be \MM &=& \{ M \in V : M=M^T \} \oplus \{ N \in V : N=-N^T \} \\ \nonumber &=& \{ M \in V : M=M^T \} \oplus \so(n). \ee  Therefore $M$ is in $T_X F$ iff $\Tr(MN)=0$ for all $N \in \so(n)$.  From \eqref{Ddfverlinear} (with $A=I$ there) we see that this latter condition is equivalent to $M$ being in the kernel of $H(g,X)$.  This proves that the kernel of the Hessian $H(g,X)$ is precisely $T_X F$ and therefore $g$ is Morse-Bott.

Since the Grassmannians $\GG(k,n)$ are connected, the index of the Hessian of $g$ is determined by its values at the critical points $-I_{k} \oplus I_{n-k}$.  These indices are easily computed:

\begin{lem} \label{lem:linearindex} When $X = -I_{k} \oplus I_{n-k}$ and $A=I$, the quadratic form $H(E,N) = \Tr(XEN)$ on $\so(n)$ of Lemma~\ref{lem:linearH} is diagonal in the standard basis (Definition~\ref{defn:Epq}) for $\so(n)$.  The numbers $H(p,q) := H(\EE(p,q),\EE(p,q))$ are given by \be H(p,q) & = & \left \{ \begin{array}{lll} -2, & \quad & k < p < q \leq n \\ 0, & & 1 \leq p \leq k < q \leq n \\ 2, & & 1 \leq p < q \leq k. \end{array} \right . \ee The index of $H$ is $\iota(k) := \bp n-k \\ 2 \ep$.\footnote{When $n=2m+1$ is odd, this formula for the index of $H$ is equivalent to \cite[Lemma~3]{F}.} \end{lem}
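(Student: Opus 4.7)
The plan is to apply Lemma~\ref{lem:linearH} with $A=I$ to get $H(E,N)=\Tr(XEN)$ and then compute $H$ directly on pairs of standard basis elements. Writing $\EE(p,q)=-e_pe_q^T+e_qe_p^T$ (where $e_1,\dots,e_n$ is the standard basis of $\RR^n$), the product $\EE(p,q)\EE(u,v)$ expands into four rank-one matrices $c_{ab}\,e_ae_b^T$ with coefficients involving Kronecker deltas $\delta_{qu},\delta_{qv},\delta_{pu},\delta_{pv}$. Since $X$ is diagonal, $\Tr(XM)=\sum_ix_iM_{ii}$, so only the $a=b$ parts of these rank-one summands contribute.

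First I would treat the diagonal case $(p,q)=(u,v)$: a direct multiplication gives $\EE(p,q)^2=-(\DD(p)+\DD(q))$ (for $p\neq q$), hence $H(p,q)=\Tr(X\EE(p,q)^2)=-(X_{pp}+X_{qq})$. Plugging in $X=-I_k\oplus I_{n-k}$ in the three cases ($p,q\le k$; $p\le k<q$; $k<p,q$) yields the values $+2$, $0$, $-2$ asserted in the lemma.

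Next I would handle the off-diagonal case: given $1\le p<q\le n$, $1\le u<v\le n$, with $(p,q)\neq(u,v)$, each of the four summands of $\EE(p,q)\EE(u,v)$ contributes to the diagonal only when its prefactor Kronecker $\delta$ and an equality among $\{p,q,u,v\}$ both hold. A quick case check shows every such simultaneous pair either forces $(p,q)=(u,v)$ (contradiction) or $p=v,\,q=u$ (impossible since $p<q$ and $u<v$). Thus $\EE(p,q)\EE(u,v)$ has zero diagonal, hence $\Tr(X\EE(p,q)\EE(u,v))=0$, proving that $H$ is diagonal in the standard basis.

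Finally, the index of $H$ is the number of standard basis vectors on which $H$ is negative, i.e.\ the number of pairs $(p,q)$ with $k<p<q\le n$, which equals $\binom{n-k}{2}=\iota(k)$. The only slightly delicate step is the case analysis in the off-diagonal vanishing, but the strict inequalities $p<q$ and $u<v$ rule out the potentially problematic pairings immediately, so no real obstacle is expected.
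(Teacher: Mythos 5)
Your proof is correct and follows essentially the same path as the paper's: show that $\EE(p,q)\EE(u,v)$ has vanishing diagonal when $(p,q)\neq(u,v)$, compute $\EE(p,q)^2=-(\DD(p)+\DD(q))$, read off $H(p,q)=-(X_{pp}+X_{qq})$, and count the negative entries. The only difference is that you spell out the off-diagonal vanishing via the rank-one decomposition $\EE(p,q)=-e_pe_q^T+e_qe_p^T$ and a Kronecker-delta case check, whereas the paper simply asserts it; your version is a bit more explicit but not materially different.
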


\begin{proof} For standard basis vectors $\EE(p,q), \EE(u,v) \in \so(n)$, we see that $\EE(p,q)\EE(u,v)$ has no non-zero diagonal entries when $(p,q) \neq (u,v)$.  Since the effect of multiplying on the left by $X$ is simply to multiply the first $k$ rows by $-1$, the matrices $X \EE(p,q) \EE(u,v)$ still have no non-zero diagonal entries when $(p,q) \neq (u,v)$.  These matrices therefore have trace zero, which shows that $H$ is diagonal in the standard basis.  The non-zero diagonal entries of $\EE(p,q)^2$ are precisely the $(p,p)$-entry and the $(q,q)$-entry, both of which are $-1$.  Taking into account the effect of multiplying on the left by $X$, we arrive at the formula for the $H(p,q)=\Tr(X \EE(p,q)^2)$.  The formula for the index amounts to counting the number of pairs $(p,q)$ with $k < p < q \leq n$. \end{proof}

The ``Morse-Bott inequalities" (i.e.\ the existence of a spectral sequence going from the cohomology of the critical locus $F$ shifted by the index of the Hessian to the cohomology of $SO(n)$) for $g$ imply that \bne{MI} \dim \H^i(SO(n),\FF) & \leq & \sum_k \dim \H^{i-\iota(2k)}(\GG(2k,n),\FF) \ene for all $i,n$ and any field $\FF$ (with $\iota(2k)$ as in Lemma~\ref{lem:linearindex}).  When $\FF=\FF_2$, Frankel shows in \cite{F} that the inequalities \eqref{MI} are in fact \emph{equalities}.  This is done by appeal to a result of E.~E.~Floyd \cite[Theorem~A]{F} asserting that, since $F$ is the fixed locus of the involution $X \mapsto X^{-1}=X^T$ of the smooth, compact manifold $SO(n)$, the sum of the mod $2$ Betti numbers of $F$ is bounded above by the sum of the mod $2$ Betti numbers of $SO(n)$.  We believe that it is possible to give a purely Morse theoretic proof of this fact (cf.\ \S\ref{section:SOn2}), but we have not attempted this.  In the Appendix we give a purely combinatorial proof of Frankel's mod $2$ Betti number relationship.

\begin{example} \label{example:SO3} When $n=3$ the critical locus $F$ of $g : SO(3) \to \RR$ is the disjoint union of $\{ I \} = \GG(0,3)$ (index $3$) and $\GG(2,3) \cong \RR \PP^2$ (index $0$).  The equalities of mod $2$ Betti numbers above amount to the equality of (mod $2$) Poincar\'e polynomials \be p(SO(3)) &=& p(\RR \PP^3) \\ &=& 1+t+t^2+t^3 \\ &=& p(\RR \PP^2) + t^3 p( \{ I \} ). \ee \end{example}

\section{Simple Morse-Bott cohomology} \label{section:BMC}

Let $X$ be a smooth compact manifold of dimension $d$.  For simplicity, we assume $X$ is connected.  A \emph{simple Morse-Bott function} is a non-constant Morse-Bott function $f : X \to \RR$ whose critical locus $F$ consists only of points where $f$ obtains its maximum or minimum value.  Throughout this section, $f$ will be a simple Morse-Bott function on $X$.

We have $F=F_0 \coprod F_k$, where $F_0$ (resp.\ $F_k$) is the index $0$ (resp.\ $k$) critical locus consisting of minima (resp.\ maxima) for $f$.  The image of $f$ must be a closed interval $[a,b] \subseteq \RR$ with $F_0 = f^{-1}(a)$, $F_k = f^{-1}(b)$.  Note that $k$ is also the codimension of $F_k$ in $X$, since the Hessian of $f$ must be negative definite on the normal bundle of $F_k$ in $X$.  Let $m$ be the codimension of $F_0$ in $X$.

Fix a Riemannian metric on $f$.  The gradient $\nabla f$ is the vector field on $X$ dual to the $1$-form $df$ under the metric.  Integration of $\nabla f$ yields a smooth action of the group $G = (\RR, +)$ on $X$, denoted $\tau \cdot x$.  The action fixes $F$ and is free on $X \setminus F$.  For any $x \in X \setminus F$, the function $t \mapsto f(\tau \cdot x)$ is a strictly increasing function of $\tau \in \RR$ approaching $b$ (resp.\ $a$) at $t \to \infty$ (resp.\ $\tau \to -\infty$).  The quotient $(X \setminus F) / G =: M$ can be (and will be) identified with any regular fiber $f^{-1}(c)$ ($c \in (a,b)$) of $f$, thus we view $M$ as a closed subspace of $X$ contained in $X \setminus F$.  There is a continuous source map \be s: X \setminus F_k & \to & F_0 \\ \nonumber s(x) & := & \lim_{\tau \to -\infty} \tau \cdot x \ee which is a locally trivial $\RR^m$ bundle and whose restriction to $M$ is a locally trivial $S^{m-1}$ bundle (any regular level set of $f$ intersects any fiber of $\tau$ in a sphere).  (See \cite[Theorem~A.9]{AB}.  It seems that $X \setminus F_k$ should be diffeomorphic to the normal bundle $N=N_{F_0/X}$ by a diffeomorphism exchanging $s$ and the projection $N \to F_0$ but in loc.\ cit.\ this is only shown to hold locally.  One can easily show that, in the situation we shall consider in \S\ref{section:SOn2}, one does have such a global diffeomorphism.)  Similarly, there is a continuous target map \be t: X \setminus F_0 & \to & F_k \\ \nonumber t(x) & := & \lim_{\tau \to +\infty} \tau \cdot x \ee which is a locally trivial $\RR^k$ bundle and whose restriction to $M$ is a locally trivial $S^{k-1}$ bundle.

Fix a ``coefficient" ring $A$.  The properties of $t$ mentioned above imply that $\R t_! \underline{A}_{X \setminus F_0}$ is locally isomorphic to $\underline{A}_{F_k}[-k]$.  We assume that $t$ is \emph{oriented} (with respect to $A$) in the sense that there is an isomorphism \be \eta : \R t_! \underline{A}_{X \setminus F_0} & \to & \underline{A}_{F_k}[-k] \ee (i.e.\ an isomorphism of $\underline{A}_{F_k}$ modules $\R^k t_! \underline{A}_{X \setminus F_0} \cong \underline{A}_{F_k}$).  Set $Y=f^{-1}[a,c]$, $Z=f^{-1}[c,b]$.

We have a commutative diagram of $\underline{A}_X$ modules with exact rows \bne{CDAXmodules} & \xym{ 0 \ar[r] & \underline{A}_{Z \setminus M} \ar[r] & \underline{A}_{Z} \ar[r] & \underline{A}_{M} \ar[r] & 0 \\ 0 \ar[r] & \underline{A}_{X \setminus Y} \ar@{=}[u] \ar[d] \ar[r] & \underline{A}_{X} \ar[u] \ar@{=}[d] \ar[r] & \underline{A}_{Y} \ar[u] \ar[d] \ar[r] & 0 \\ 0 \ar[r] & \underline{A}_{X \setminus F_0} \ar[r] & \underline{A}_{X} \ar[r] & \underline{A}_{F_0} \ar[r] & 0 } \ene where we have suppressed notation for proper pushforwards to $X$ (this is the usual pushforward for the closed subspaces $F_0$, $M$, $Y$, and $Z$ and the ``extension by zero" for the open subspaces $Z \setminus M = X \setminus Y$ and $X \setminus F_0$).

The map $\underline{A}_{X \setminus Y} \to \underline{A}_{X \setminus F_0}$ in \eqref{CDAXmodules} becomes an isomorphism when $\R t_!$ is applied:  Indeed, by the base change theorem for proper direct images, it suffices to show that, for any $x \in F_k$, the map $(t|X \setminus Y)^{-1}(x) \into t^{-1}(x)$ induces an isomorphism on compactly supported cohomology.  This map is homeomorphic to the inclusion of the open unit ball into $\RR^k$, so this is indeed the case.  The ``orientation" isomorphism $\eta$ from above therefore also yields an isomorphism $\R t_! \underline{A}_{X \setminus Y} \cong \underline{A}_{F_k}[-k]$ which we also call $\eta$.

Applying $\R t_!$ to (the triangle associated to) the top row of \eqref{CDAXmodules} and using this isomorphism, we obtain a triangle \bne{Fktriangle} & \xym{ \underline{A}_{F_k}[-k] \ar[r] & \R t_! \underline{A}_Z \ar[r] & \R t_* \underline{A}_M \ar[r]^-{t_*} & \underline{A}_{F_k}[1-k] } \ene in the derived category $\D( \underline{A}_{F_k} )$ (note that $\R t_! \underline{A}_M = \R t_* \underline{A}_M$ because $t|M : M \to F_k$ is a sphere bundle, so it is proper).  Applying $\R \Gamma = \R \Gamma(X,\slot)$ to \eqref{CDAXmodules} and using the isomorphism(s) $\eta$ yields a commutative diagram \be & \xym{ \R \Gamma(F_k,A)[-k]  \ar[r] & \R \Gamma(Z,A) \ar[r] & \R \Gamma(M,A) \ar[r]^-{t_*} & \R \Gamma(F_k,A)[1-k] \\ \R \Gamma(F_k,A)[-k] \ar@{=}[u] \ar[d]_{\cong} \ar[r] & \R \Gamma(X,A) \ar[u] \ar@{=}[d] \ar[r] & \R \Gamma(Y,A) \ar[u] \ar[d]_{\cong} \ar[r] & \R \Gamma(F_k,A)[1-k] \ar[d]_{\cong} \ar@{=}[u] \\ \R \Gamma_!(X \setminus F_0,A) \ar[r] & \R \Gamma(X,A) \ar[r] & \R \Gamma(F_0,A) \ar[r] & \R \Gamma_!(X \setminus F_0,A)[1]  } \ee in $\D(A)$ where the rows are triangles, $\Gamma_!$ is the compactly supported global sections functor, and the vertical arrows define maps of triangles.  The map $\R \Gamma(Y,A) \to \R \Gamma(F_0,A)$ is the map induced by the inclusion $F_0 \into Y$.  This inclusion induces isomorphisms on cohomology because it is retracted by the map $s|Y : Y \to F_0$, which is a closed disc bundle and hence induces isomorphisms on cohomology.  We thus see that the map $\R \Gamma(F_0,A) \to \R \Gamma(M,A)$ given by composing the inverse of $\R \Gamma(Y,A) \to \R \Gamma(F_0,A)$ and $\R \Gamma(Y,A) \to \R \Gamma(M,A)$ (this map is the one induced by the inclusion $M \into Y$) is nothing but the map induced by $s|M : M \to F_0$.  We thus obtain a triangle \be & \xym{ \R \Gamma(F_k,A)[-k] \ar[r] & \R \Gamma(X,A) \ar[r] & \R \Gamma(F_0,A) \ar[r]^-{ t_* s^*} & \R \Gamma(F_k,A)[1-k] } \ee in $\D(A)$ and hence also a long exact sequence \bne{LES} & \xym{ \cdots \ar[r]^-{t_*s^*} & \H^{i-k}(F_k) \ar[r] & \H^i(X) \ar[r] & \H^i(F_0) \ar[r]^-{t_* s^*} & \H^{i+1-k}(F_k) \ar[r] & \cdots } \ene in cohomology (coefficients in $A$ understood).

The map $t_* : \H^{*}(M) \to \H^{*+1-k}(F_k)$ appearing in \eqref{LES} satisfies the \emph{Projection Formula} \be  t_*(t^*(\alpha) \cdot \beta) & = & \alpha \cdot t_*( \beta) \ee for $\alpha \in \H^*(F_k)$, $\beta \in \H^*(M)$.  This holds as a matter of ``general nonsense" owing to the construction of the latter map from the map $t_* : \R t_* \underline{A}_M \to \underline{A}_{F_k}[1-k]$ in \eqref{Fktriangle}.  We will recall the details for the reader's convenience.  A cohomology class $\alpha \in \H^r(F_k)$ is the same thing as a $\D(\underline{A}_{F_k})$-morphism $\alpha : \underline{A}_{F_k} \to \underline{A}_{F_k}[r]$.  Similarly, $\beta \in \H^q(M)$ is a map $\beta : \underline{A}_M \to \underline{A}_M[q]$.  The cup product $t^*(\alpha) \cdot \beta$ corresponds to the composition \be & \xym@C+10pt{ \underline{A}_M \ar[r]^-{ t^{-1} \alpha } & \underline{A}_M[r] \ar[r]^-{\beta[r]} & \underline{A}_M[r+q].} \ee  The adjunction map $\Id \to t_* t^{-1}$ yields a natural transformation $\Id \to Rt_* t^{-1}$.  Evaluating this on $\underline{A}_{F_k}$ yields a natural map $\theta : \underline{A}_{F_k} \to  \R t_* \underline{A}_M$.  The cohomology class $t_* \beta \in \H^{q+1-k}(F_k)$ corresponds to the composition \be & \xym{ \underline{A}_{F_k} \ar[r]^-{\theta} & \R t_* \underline{A}_M \ar[r]^-{\R t_* \beta} & \R t_* \underline{A}_M[q] \ar[r]^-{t_*[q]} & \underline{A}_{F_k}[q+1-k]. } \ee

We have a commutative diagram \be & \xym@C+15pt{ \underline{A}_{F_k} \ar[r]^-{\alpha} \ar[d]_{\theta} & \underline{A}_{F_k}[r] \ar[d]_{\theta[r]} & \underline{A}_{F_k}[q+r+1-k] \\ \R t_* \underline{A}_M \ar[r]^-{ \R t_* t^{-1} \alpha } & \R t_* \underline{A}_M[r] \ar[r]^-{ \R t_* \beta[r] } & \R t_* \underline{A}_M[q+r] \ar[u]^{t_*[q+r]} } \ee in $\D(\underline{A}_{F_k})$.  The two ways around this diagram are the two sides of the Projection Formula (right first is the LHS, down first is the RHS).

The long exact sequence \eqref{LES} gives rise to short exact sequences \be  0 \to \Cok(\delta_{i-1}) \to \H^i(X) \to \Ker(\delta_i) \to 0, \ee where \be \delta_i = t_*s^* : \H^i(F_0) & \to & \H^{i+1-k}(F_k). \ee

\section{A particularly nice Morse-Bott function on $SO(n)$} \label{section:SOn2}

We now specialize the discussion of \S\ref{section:SOn1} to the case where $A = {\rm Diag}(0,\dots,0,1)$.  In this case $f=f_A : SO(n) \to \RR$ is given by $f(X) = X_{nn}$ (the lower right entry of $X$).  This function $f$ arises as the composition of the map \be p : SO(n) & \to & S^{n-1} \\ p(X) & := & (X_{n1},\dots,X_{nn}) \ee and the usual height function \be h :S^{n-1} & \to & \RR \\ h(x_1,\dots,x_n) & := & x_n. \ee  The map $p$ is submersive --- in fact it is an $SO(n-1)$ principal bundle (see below).  The height function is Morse, hence $f$ is Morse-Bott.  The critical points of the height function are just the points $(0,\dots,0,\pm 1)$ of $S^{n-1}$ where the height is minimum or maximum, hence the critical loci of $f$ are just the points of $SO(n)$ mapped to these two points by $p$.

Explicitly, the critical locus $F$ of $f$ is $F_0 \coprod F_{n-1}$ where \be F_0 & = & \{ X \in SO(n) : X_{nn} = -1 \} \\ F_{n-1} & = & \{ X \in SO(n) : X_{nn}=1 \}. \ee  We identify $F_{n-1}$ with $SO(n-1)$ by taking $Q \in SO(n-1)$ to the block-diagonal matrix $Q \oplus 1 \in F_{n-1}$.  Throughout, we let $J := {\rm Diag}(-1,1,\dots,1) \in O(n)$.  Geometrically, $J$ is a reflection across the hyperplane $e_1^\perp$.  We have $\det J = -1$ and $J^2=I$.  We identify $F_0$ with $SO(n-1)$ by taking $Q \in SO(n-1)$ to the block-diagonal matrix $JQ \oplus -1 \in F_0$.

Since $f$ takes its minimum (resp.\ maximum) value on $F_0$ (resp.\ $F_{n-1}$), the Hessian of $f$ must be positive (resp.\ negative) definite on the normal bundle of $F_0$ (resp.\ $F_{n-1}$).  Therefore, $F_0$ is of index $0$ and $F_{n-1}$ is of index $n-1 = {\rm codim}(F_{n-1} \subseteq SO(n))$.

The moduli space of flow lines $M = f^{-1}(0)$ is just the set of $X \in SO(n)$ with $X_{nn}=0$.  We shall make use of the map $\pi : M \to S^{n-2}$ defined by $\pi(X) := (X_{1n},\dots,X_{n-1,n})$.  I.e., $\pi(X)$ is the \emph{right column} of $X$, excepting the lower right entry, which is zero.  The map $\pi$ is not to be confused with the restriction of $p : SO(n) \to S^{n-1}$ to $M$.  The latter can also be viewed as a map $M \to S^{n-2}$, defined using the \emph{bottom row} rather than the \emph{right column} (always excepting the lower right entry).

The group $G := SO(n-1)$ acts (on the left, smoothly) on $SO(n)$ by letting $g \in G$ act on $SO(n)$ via left multiplication by $g \oplus 1 \in SO(n)$.  This is an action through isometries of $SO(n)$ making $p$ a principal $G$-bundle.  In particular, $p$ is $G$-invariant, and hence so is $f=hp$.  Hence $G$ also acts naturally on $F_0$, $F_{n-1}$, and $M$, making the source map $s:M \to F_0$ and target map $t : M \to F_{n-1}$ equivariant.  Under the identifications $F_0 = SO(n-1)$, $F_{n-1} = SO(n-1)$ from above, the $G$ action on $F_{n-1}$ is identified with the action of $SO(n-1)$ on itself by left multiplication ($g \cdot Q = gQ$), while the $G$ action on $F_0$ is identified with the action of $SO(n-1)$ on itself defined by $g \cdot Q = JgJQ$.  Note that the map $\pi : M \to S^{n-2}$ defined above is $G$-\emph{equivariant} (not $G$-\emph{invariant}) when $G=SO(n-1)$ acts on $S^{n-2}$ by left multiplication, as usual.  The latter action is transitive, hence:

\noindent {\bf (*)}  For any $X \in M$, there is a $g \in G$ such that the right column of $g \cdot X$ is $e_1$.

The general formula \eqref{gradfSOn}, specialized to our case ($A={\rm Diag}(0,\dots,0,1)$) yields the following explicit formula for the gradient of our Morse-Bott function $f$: \be (\nabla f)(X) & = & \frac{1}{2} \bp -X_{1n}X_{n1} & -X_{1n}X_{n2} & \cdots & -X_{1n}X_{nn} \\
                                                                       -X_{2n}X_{n1} & -X_{2n}X_{n2} & \cdots & -X_{2n}X_{nn} \\
																	   \vdots & \vdots & & \vdots \\
																	   -X_{n-1,n}X_{n1} & -X_{n-1,n}X_{n2} & \cdots & -X_{n-1,n}X_{nn} \\
																	   -X_{nn}X_{n1} & -X_{nn}X_{n2} & \cdots & 1-X_{nn}^2 \ep. \ee  
The \emph{key observation} about this formula is that rows $2,\dots,n-1$ of $(\nabla f)(X)$ will be zero whenever $X$ has $X_{2n}=X_{3n}=\cdots=X_{n-1,n}=0$.  This implies that, for such an $X$, rows $2,\dots,n-1$ of $X$ must remain constant along the gradient flow of $X$ (and hence also at the two limit points of the flow of $X$, when $X$ is not a critical point).  This observation and {\bf (*)} above allow us to describe the source and target maps $s$, $t$ explicitly, as follows:  Consider some $X \in M$ whose right column is $e_1$.  Write $X$ in the block form \bne{blockform} X & = & \bp 0 & 1 \\ V & 0 \\ v & 0 \ep \ene where $V$ is $(n-2) \times (n-1)$ and $v$ is $1 \times (n-1)$.  Note that $$ \bp V \\ v \ep $$ is in $O(n-1)$ and has determinant $(-1)^{n+1}$, hence \be \det \bp v \\ V \ep & = & -1. \ee  Since rows $2,\dots,n-1$ of $s(X) \in F_0 \subseteq SO(n)$ must be the same as those of $X$, we must have \be & s(X) = \bp v & 0 \\ V & 0 \\ 0 & -1 \ep \in F_0 \subseteq SO(n) \ee Under our identification $F_0 = SO(n-1)$, we therefore have \bne{sX2} s(X) & = & \bp -v \\ V \ep \in SO(n-1) \ene (recall that this identification involves a left multiplication by $J$, which changes the sign of the first row).  Similar considerations show that, for $X$ as above, $t(X) \in F_{n-1} \subseteq SO(n)$ is given by \be & t(X) = \bp -v & 0 \\ V & 0 \\ 0 & 1 \ep \in F_{n-1} \subseteq SO(n). \ee Under our identification $F_{n-1}=SO(n-1)$, we therefore have \bne{tX2} t(X) & = & \bp -v \\ V \ep  \in SO(n-1). \ene  Since the maps $s : M \to SO(n-1)$ and $t : M \to SO(n-1)$ are $G$-equivariant, they are determined by \eqref{sX2} and \eqref{tX2} in light of {\bf (*)}.

The maps $s$ and $t$, while not the same, are still ``close to being equal" in a sense we will now make precise.  Since $SO(n-1)$ is a Lie group, the set $\Hom(M,SO(n-1))$ has a natural group structure, hence we can consider the map $st^{-1} : M \to SO(n-1)$.  (If $s$ and $t$ are ``kind of the same," then $st^{-1}$ ought to be ``close to the identity".)  To describe this map, first notice that, for any $L \in \RR \PP^{n-2}$ (i.e.\ any $1$-dimensional linear subspace of $\RR^{n-1}$), the reflection $\rho(L^\perp)$ across the hyperplane $L^\perp$ is an orthogonal linear transformation of $\RR^{n-1}$ of determinant $-1$.  Following this reflection by our reflection $J$ yields an element $r(L) := J \rho(L^\perp) \in SO(n-1)$.  This defines a smooth map $r : \RR \PP^{n-2} \to SO(n-1)$.

\begin{prop} \label{prop:main} The map $st^{-1} : M \to SO(n-1)$ is equal to the composition of the map $\pi : M \to S^{n-2}$, the quotient projection $q : S^{n-2} \to \RR \PP^{n-2}$, and the map $r : \RR \PP^{n-2} \to SO(n-1)$ discussed above. \end{prop}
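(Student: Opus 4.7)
The plan is to use the $G = SO(n-1)$-action on $M$ to reduce the identity $st^{-1} = r\circ q\circ \pi$ to a single base point, then verify it there directly using formulas \eqref{sX2} and \eqref{tX2}. I will split this into two steps: matching up the $G$-equivariance of both sides, and then checking the identity at a suitable base point provided by observation $(*)$.

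First, I would compute how both sides transform under the left action of $g \in G$ on $M$. Under the identifications adopted above, $t : M \to F_{n-1} = SO(n-1)$ is equivariant for the left-multiplication action $g\cdot Q = gQ$, while $s : M \to F_0 = SO(n-1)$ is equivariant for the twisted action $g \cdot Q = JgJ\, Q$ (the chosen identification $Q \mapsto JQ \oplus -1$ introduces an extra factor of $J$). Combining these gives
$$st^{-1}(g \cdot X) \;=\; (JgJ) \cdot st^{-1}(X) \cdot g^{-1}.$$
On the other side, $\pi$ is $G$-equivariant for the standard $SO(n-1)$-action on $S^{n-2}$, and conjugation in $O(n-1)$ satisfies $\rho((gL)^\perp) = g\,\rho(L^\perp)\,g^{-1}$ for any line $L \subseteq \RR^{n-1}$. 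Using the trivial identity $Jg = (JgJ)\,J$, this yields
$$r(q(\pi(g \cdot X))) \;=\; Jg\,\rho(\pi(X)^\perp)\,g^{-1} \;=\; (JgJ) \cdot r(q(\pi(X))) \cdot g^{-1},$$
so both sides obey the same twisted equivariance $F(g \cdot X) = (JgJ)\cdot F(X)\cdot g^{-1}$.

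Second, by observation $(*)$, every $X \in M$ lies in the $G$-orbit of some $X_0 \in M$ whose right column is $e_1$, so by the first step it suffices to verify the proposition at such an $X_0$. For $X_0$ written in the block form \eqref{blockform}, formulas \eqref{sX2} and \eqref{tX2} give $s(X_0) = t(X_0) = \bp -v \\ V \ep$, hence $st^{-1}(X_0) = I$. On the other side, $\pi(X_0) = e_1$, and the reflection $\rho(e_1^\perp) \in O(n-1)$ is represented by the same matrix $\mathrm{Diag}(-1,1,\ldots,1)$ as $J$, so $r(q(\pi(X_0))) = J\cdot J = I$. Both sides agree, completing the argument.

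The only delicate point---and the main obstacle worth flagging---is getting the twisted equivariance of $s$ correct. The identification $F_0 = SO(n-1)$ is defined via left multiplication by $J$, which converts the naive left-multiplication $G$-action into the conjugated action $g \mapsto JgJ$, and it is precisely this conjugation that has to match the factor of $J$ built into the definition $r(L) = J\rho(L^\perp)$. Once this bookkeeping is in place the base-point computation is immediate, and (as a sanity check) the fact that $s(X_0) = t(X_0)$ at the base point---even though $s$ and $t$ land in two identified copies of $SO(n-1)$ carrying different $G$-actions---is exactly the content of the $G$-orbit reduction succeeding.
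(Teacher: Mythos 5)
Your proof is correct and takes essentially the same approach as the paper. The paper computes $st^{-1}(Y) = JgJg^{-1}$ for $Y = g\cdot X$ directly and matches it with $(rq\pi)(Y) = J\rho(w^\perp)$ via the explicit matrix identity $gJg^T = I - 2ww^T = \rho(w^\perp)$ (with $w = \pi(Y)$ the first column of $g$); your reorganization into a twisted-equivariance law for both sides plus a base-point check is a slightly cleaner packaging of the same calculation, with the conceptual conjugation identity $\rho((gL)^\perp) = g\rho(L^\perp)g^{-1}$ standing in for that explicit matrix computation.
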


\begin{proof}  Given any $Y \in M$, by {\bf (*)} we can find some $g \in G=SO(n-1)$ and some $X \in M$ with right column $e_1$ so that $Y = g \cdot X$.  If we write this $X$ in the block form \eqref{blockform}, then, as we saw above, \be & s(Y) = s(g \cdot X) = g \cdot s(X) = JgJ  \bp -v \\ V \ep \\ & t(Y) = t(g \cdot X) = g \cdot t(X) = g  \bp -v \\ V \ep, \ee hence $(st^{-1})(Y) = JgJg^{-1} = JgJg^T$.  Now write $g$ in the block form \be g &=& \bp w & W \ep \ee where $w$ is $(n-1) \times 1$ and $W$ is $(n-1) \times (n-2)$.  The fact that $gg^T=I$ is equivalent to $WW^T = I-ww^T$, and, using this, we compute \be gJg^T & = & \bp w & W \ep J \bp w^T \\ W^T \ep \\ & = & \bp w & W \ep \bp -w^T \\ W^T \ep \\ & = & -ww^T+WW^T \\ & = & I - 2 ww^T. \ee  Since the matrix $ww^T$ is the orthogonal projection onto (the span of) $w$, we see that $I-2ww^T = \rho(w^\perp)$ is the reflection across $w^\perp$.  We have $$ Y = g \cdot X = \bp w & W & 0 \\ 0 & 0 & 1 \ep \bp 0 & 1 \\ V & 0 \\ v & 0 \ep = \bp WV & w \\ v & 0 \ep,$$ so $\pi(Y)=w$, $(q\pi)(y)$ is the span of $w$, and $(rq\pi)(y) = J \rho(w^\perp)$ is reflection across $w^\perp$ followed by $J$, which is indeed equal to $(st^{-1})(Y) = JgJg^{-1}$ since we just saw that $gJg^{-1} = \rho(w^\perp)$. \end{proof} 

We can use our study of the Morse-Bott function $f$ to calculate the Betti numbers $b_i(n) := \dim \H^i(SO(n),\FF_2)$ of $SO(n)$ with coefficients in the two element field $\FF_2$.  We will show that these are given by \be b_i(n) & = &  |   \{ S \subseteq \{ 1, \dots, n-1 \} : \sum_{s \in S} s = i  \}  |. \ee  Using the fact that a subset $S \subseteq \{ 1, \dots, n-1 \}$ either contains $n-1$ or does not, we obtain \bne{biformula2} b_i(n) & = & b_i(n-1) + b_{i+1-n}(n-1) \ene for all $n>1$.  The formula \eqref{biformula2}, together with the fact that $b_0(1)=1$ and $b_i(1)=0$ for $i \neq 0$, uniquely determines all the $b_i(n)$.  In terms of the Poincar\'e polynomials $p_n(t) := \sum_i b_i(n)t^i$, the description of the $b_i(1)$ above is equivalent to $p_1(t)=1$ and formula \eqref{biformula2} is equivalent to \bne{biformula3} p_n(t) & = & p_{n-1}(t) + t^{n-1} p_{n-1}(t) \ene for all $n>1$.  Clearly \eqref{biformula3}, together with the fact that $p_1(t)=1$, uniquely determines the $p_n(t)$.  Since $SO(1)$ is a point, its Poincar\'e polynomial is indeed $p_1(t)=1$, so to show that our formula for the $b_i(n)$ is correct, it is therefore enough to show that the Poincar\'e polynomials of $SO(n)$ satisfy \eqref{biformula3} for all $n>1$.  We assume $n>1$ from now on.

The long exact sequence \eqref{LES} for our Morse function $f$ relates the cohomology of $SO(n)$ to the cohomology of $F_0$ and $F_{n-1}$ (with a degree shift of $n-1$ for the latter), both of which are identified with $SO(n-1)$.  From this sequence, we see that \eqref{biformula3} is equivalent to saying that the maps \be t_* s^* : \H^i(SO(n-1),\FF_2) & \to & \H^{i+1-n}(SO(n-1),\FF_2) \ee are all zero.

Since $n>1$, the relative dimension $n-1$ of $t$ is $>0$, hence it follows from the Projection Formula that $t_* t^* = 0$, so it will be enough to show that $s^* = t^*$ as maps $$\H^i(SO(n-1),\FF_2) \to \H^i(M,\FF_2).$$  To see this, we write $s$ as the composition of \be st^{-1} \times t : M & \to & SO(n-1) \times SO(n-1) \ee and the multiplication map \be \mu : SO(n-1) \times SO(n-1) & \to & SO(n-1). \ee  We are working over a field, so we have a K\"unneth Formula isomorphism \be \H^*(SO(n-1),\FF_2) \otimes \H^*(SO(n-1),\FF_2) & = & \H^*(SO(n-1) \times SO(n-1),\FF_2) \\ \alpha \otimes \beta & \mapsto & (\pi_1^* \alpha )(\pi_2^* \beta). \ee  In terms of this isomorphism, $\mu^*$ can be written \be \mu^* \beta & = & \beta \otimes 1 + 1 \otimes \beta + \sum_j \beta'_j \otimes \beta''_j, \ee where the $\beta'_j$ and $\beta''_j$ have positive degree.\footnote{It is easy to prove that this holds for any $H$-space.  See \cite[Page 283]{H}.  In fact, the Hopf algebra structure on $\H^*(SO(n-1),\FF_2)$ can be shown to be primitive \cite[Page 298]{H}, meaning the terms in the sum over $j$ above are not actually present, but we do not need to make use of this fact.}

Now we get to the (next) \emph{key observation}: According to Proposition~\ref{prop:main}, the map $st^{-1}$ factors through the quotient map $q : S^{n-2} \to \RR \PP^{n-2}$.  But the maps \be q^* : \H^i( \RR \PP^{n-2}, \FF_2 ) & \to & \H^i( S^{n-2}, \FF_2 ) \ee are zero for $i \neq 0$ (this is obvious for $i \neq n-2$ and holds when $i=n-2$ because $q$ has degree $2$), hence $(st^{-1})^* \alpha = 0$ whenever $\alpha$ has positive degree.  The equality that we want to establish, $s^* = t^*$, is obvious in degree $0$, and, for $\beta$ of positive degree, we compute \be s^* \beta & = & ((st^{-1}) \times t)^* \mu^* \beta \\ & = & ((st^{-1})^* \otimes t^* ) ( \beta \otimes 1 + 1 \otimes \beta  + \sum_j \beta'_j \otimes \beta''_j) \\ & = & (st^{-1})^* \beta + t^* \beta + \sum_j (st^{-1})^*(\beta'_j))(t^*(\beta''_j)) \\ & = & t^* \beta. \ee

\section*{Appendix}

Recall from \S\ref{section:SOn1} that Frankel, in \cite{F}, obtained---via Morse Theory---a relationship between the mod $2$ Betti numbers of $SO(n)$ and those of the Grassmannians $\GG(2k,n)$.  Here we will express and prove this relationship in a purely combinatorial manner.  This combinatorial discussion can be interpreted in various ways.  It can be viewed as a derivation of the combinatorial formula for the mod $2$ Betti numbers of $SO(n)$ given in \S\ref{section:SOn2} from Frankel's relationship (assuming, as we will, standard formulae for the mod $2$ Betti numbers of Grassmannians).  Alternatively, it can be viewed as a derivation of Frankel's relationship from the aforementioned combinatorial formula.

It will be convenient to work with $O(n)$ instead of $SO(n)$ to avoid various parity considerations.  For our purposes, Frankel's relationship is most naturally written \bne{FrankelR} \dim \H^i(O(n),\FF_2) & = & \sum_k \dim \H^{i-\iota(k)}(\GG(k,n),\FF_2), \ene where $\iota(k) := \bp k \\ 2 \ep$.  Since $\GG(k,n) \cong \GG(n-k,n)$, the RHS of \eqref{FrankelR} is unchanged if we instead take $\iota(k) := \bp n-k \\ 2 \ep$.  It can be seen similarly that \eqref{FrankelR} is equivalent to \be \dim \H^i(SO(n),\FF_2) &=& \sum_k \dim \H^{i-\iota(2k)}(\GG(2k,n),\FF_2), \ee using the latter definition of the $\iota(k)$.  (This is the form of Frankel's relationship in \S\ref{section:SOn1} and \cite{F}.)

\begin{defn} \label{defn:degree} For a positive integer $n$, set $[n]:= \{ 1,\dots,n \}$.  For $S \subseteq [n]$, define \be \deg S & := & | \{ (p,q) \in ([n] \setminus S) \times S : p < q \} | \\ \sdeg S & := & \bp |S| \\ 2 \ep + \deg S = |\{ (p,q) \in [n]  \times S : p < q\}|.  \ee  If $n$ is not clear from context we will write $\deg_n S$ (resp.\ $\sdeg_n S$) for $\deg S$ (resp.\ $\sdeg S$).  Define \be C_i(k,n) & := & \{ S \subseteq [n] : |S|=k, \; \deg S = i \} \\ C_i(n) & := & \{ S \subseteq [n] : \sdeg S = i \} \\ c_i(k,n) & := & | C_i(k,n) | \\ c_i(n) & := & | C_i(n) | \\ b_i(n) & := & | \{ S \subseteq [n-1] : \sum_{s \in S} s = i \} |. \ee \end{defn}  

The numbers $b_i(n)$ defined above were also defined in \S\ref{section:SOn2}, where it was shown that \be \dim \H^i(SO(n),\FF_2) &=& b_i(n). \ee

It is well-known that \be \dim \H^i(\GG(k,n),\FF_2) & = & c_i(k,n), \ee hence the RHS of \eqref{FrankelR} is nothing but $c_i(n)$.  Our combinatorial version of Frankel's relationship is the formula \bne{combFrankel} 2 b_i(n) & = & c_i(n). \ene  To establish \eqref{combFrankel}, first note that, when $n=1$, both sides of \eqref{combFrankel} are $2$ (resp. $0$) when $i=0$ (resp.\ when $i \neq 0$).  (We have $C_0(1) = \{ \emptyset, \{ 1 \} \}$.)  As in \S\ref{section:SOn2}, we see easily that \bne{brec} b_i(n) &=& b_i(n-1)+b_{i+1-n}(n-1) \ene for all $i$ and all $n>1$.  Together with the known values of the $b_i(1)$, formula \eqref{brec} determines all the $b_i(n)$.  To establish \eqref{combFrankel} it remains only to show that \bne{crec} c_i(n) &=& c_i(n-1)+c_{i+1-n}(n-1) \ene for all $i$ and all $n>1$.  To do this, first observe that, for any $S \subseteq [n-1]$ (which we can also regard as a subset of $[n]$), we have \bne{degrel} \deg_{n-1} S & = & \deg_n S \\ \label{sdegform1} \sdeg_{n-1} S &=& \sdeg_n S. \ene  Next observe that \be & & \{ (p,q) \in ([n] \setminus (S \coprod \{ n \})) \times (S \coprod \{ n \} ) : p < q \} \\ & = & \{ (p,q) \in ([n-1] \setminus S) \times S : p < q \} \coprod \{ (p,n) : p \in ([n-1] \setminus S) \}. \ee  Taking cardinalities yields \bne{nextdegform} \deg_n (S \coprod \{ n \}) &=& n-1-|S|+\deg_{n-1} S. \ene  Adding \eqref{nextdegform} and the formula \be \bp |S|+1 \\ 2 \ep &=& \bp |S| \\ 2 \ep + |S| \ee yields \bne{sdegform2} \sdeg_n( S \coprod \{ n \}) & =& n-1+\sdeg_{n-1} S. \ene  By \eqref{sdegform1} the rule $S \mapsto S$ defines a map $C_i(n-1) \to C_i(n)$ and by \eqref{sdegform2} the rule $S \mapsto S \coprod \{ n \}$ defines a map $C_{i+1-n}(n-1) \to C_i(n)$.  The coproduct of these two maps yields a map \be C_i(n-1) \coprod C_{i+1-n}(n-1) & \to & C_i(n) \ee which is clearly bijective because any $S \in C_i(n)$ either contains $n$ or doesn't.  Taking cardinalities yields the recursion \eqref{crec}.

\end{document}